\documentclass[12pt,twoside]{amsart}
\usepackage{amsmath,mathrsfs,dsfont,amsthm}
\usepackage{amscd,amssymb,epsfig,yhmath}
\usepackage{wasysym}
\usepackage{pifont}
\usepackage{skak}
\usepackage{hyperref}

\newcommand{\abs}[1]{\vert#1\vert}
\def\norm#1{\left\Vert#1\right\Vert}
\def\R {{\mathds R}}

\def\N{{\mathds N}}
\def\Id{{\mathbb{Id}}}
\def\e{{\varepsilon}}

\def\H{{\mathcal H}}

\def\lin{{\mathrm{lin}\,}}

\hypersetup{
  colorlinks=true,       
  linkcolor=magenta,          
    filecolor=magenta,      
    urlcolor=cyan           
}

\newtheorem{theorem}{Theorem}[section]
\newtheorem{corollary}[theorem]{Corollary}
\newtheorem{lemma}[theorem]{Lemma}
\newtheorem{proposition}[theorem]{Proposition}
\theoremstyle{definition}

\theoremstyle{remark}
\newtheorem{remark}[theorem]{Remark}

\newtheorem{example}[theorem]{Example}
\numberwithin{equation}{section}

\begin{document}

\begin{center}
\renewcommand{\thefootnote}{\fnsymbol{footnote}}
{\large\bf On finite-dimensional encoding/decoding theorems for neural operators}
\\[.6cm]

{\sc Vin\'\i cius Luz Oliveira$^{ a}$ and Vladimir G. Pestov$^{ b,c}$}
\\[.5cm]
$^{ a}$Departamento de F\'\i sica, Universidade Federal de Santa Catarina, Trindade, Florian\'opolis, SC, 88035-972, Brazil
\\[1mm]
viniluzoliveira@gmail.com
\\[1mm]
$^{ b}$Departamento de Matem\'atica, Universidade Federal de Santa Catarina, Trindade, Florian\'opolis, SC, 88040-900, Brazil
\\[.5mm]
$^{ c}$Department of Mathematics and Statistics, University of Ottawa,
Ottawa, ON \\
K1N 6N5 Canada \footnote[5]{Emeritus Professor.}
\\[1mm]
vpest283@uottawa.ca
\\[.5cm]
\end{center}

{\footnotesize\em \clock\hskip .3cm  Version as of 11:41 BRT, Jan. 20, 2026}
\vskip .2cm

\hrule
\vskip .4cm

{\footnotesize\bf Abstract}

\begin{quote}
{\footnotesize
  \noindent
  Recently, versions of neural networks with infinite-dimensional affine operators inside the computational units (``neural operator'' networks) have been applied to learn solutions to differential equations. To enable practical computations, one employs finite-dimensional encoding/decoding theorems of the following kind: every continuous mapping $f$ between function spaces $E$ and $F$ is approximated in the topology of uniform convergence on compacta by continuous mappings factoring through two finite dimensional Banach spaces.
Such a result is known (Kovachki et al., 2023) for $E,F$ being Banach spaces having the approximation property. We point out that the result needs no assumptions on $E,F$ whatsoever and remains true not only for all normed spaces, but for arbitrary locally convex spaces as well.
At the same time, an analogous result for $C^k$-smooth mappings and the $C^k$ compact open topology, $k\geq 1$, holds if and only if the space $E$ has the approximation property.
This analysis may be useful already because non-normable locally convex function spaces are common in the theory of differential equations, the main field of applications for the emerging theory.
}
\end{quote}
\vskip .4cm
\hrule

\vskip 1cm

\section{Introduction}
Every computational unit of a traditional neural network is, from a mathematical viewpoint, a function obtained by composing an affine functional on a finite-dimensional vector space, $x\mapsto\sum_{i=1}^nw_ix_i+\theta$, with a non-linear activation function, $\sigma$. If there are $m$ units in a layer, we can view them as coordinate components of a single map from $\R^n$ to $\R^m$, obtained by composing an affine operator from $\R^n$ to $\R^m$ (a composition of a linear operator with a shift by a vector of thresholds), and a non-linear function from $\R^m$ to itself.

Now one can think of generalizing this by allowing the affine operator and the activation mapping to act between {\em infinite-dimensional topological vector spaces} of inputs and outputs, in which case the linear operator will have to be continuous and perhaps subject to more stringent restrictions, such as being compact. Recently it was observed \cite{LJK} that such idealized neural networks with infinite-dimensional computational units can be useful for modelling the learning process of solutions to differential equations. In those models, the infinite-dimensional spaces of inputs and outputs are typically function spaces of coefficients and of solutions to a differential equation, and the the affine operators are parameters to be learned.
Without attempting to describe the architecture and functioning of those ``neural operator'' networks in our note, nor to survey the existing work in the area (which has exploded over just a few years), we refer the reader instead to the recent survey \cite{KLS}, including a literature review with a great many references.

Clearly, such neural operator networks are a mere idealization tool, and the practical computations can only handle mappings between spaces of a very high, but still finite, dimension. For this reason, any mapping computed by a ``neural operator'' network needs to be reducible to a mapping between finite-dimensional vector spaces. This is achieved with the help of the so-called finite-dimensional {\em encoding/decoding} (or: {\em latent structure}) theorems, see Sect. 2.4 and elsewhere in \cite{KLS} for a discussion.

Given two function spaces $E$ and $F$ and a continuous mapping $f\colon E\to F$, such a theorem seeks to approximate $f$ with continuous mappings that factor through two finite-dimensional spaces.
The approximation is understood in the sense of uniform convergence on compacta.
\begin{figure}[ht]
\begin{center}
\scalebox{1.5}{\includegraphics{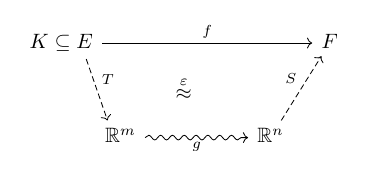}}
\caption
{Decoding/encoding}
\end{center}
\end{figure}

Thus, given a compact subset $K\subseteq E$ and an $\e>0$, we seek 
two finite-dimensional vector spaces, $E_1\cong \R^m$ and $F_1\cong\R^n$, two continuous linear operators, $T\colon E\to E_1$ and $S\colon F_1\to F$, and a continuous function $g\colon F_1\to E_1$ having the property:
\[\max_{x\in K}\norm{f(x)-S\circ g\circ T(x)}_F<\e.\]

One case of obvious importance is where $E$ and $F$ are both Hilbert spaces (for nstance, Sobolev spaces). For $E,F$ Hilbert, the possibility of finite-dimensional encoding/decoding was proved in \cite{BHKS}. In (\cite{KLLABSA}, Lemma 22), the result was extended to the case where $E$ and $F$ are Banach spaces having the {\em approximation property} in the sense of classical Banach space theory.

The first aim of our note is to remove any restrictions on $E$ and $F$ whatsoever and to state the theorem in about the most general form possible.

\begin{theorem}[Finite encoding/decoding for neural operators, continuous case]
  Let $f\colon E\to F$ be a continuous mapping between two locally convex spaces. It is approximated in the topology of uniform convergence on compacta by mappings of the form $S\circ g\circ T$, where $S\colon F_1\to F$ and $T\colon R\to E_1$ are bounded linear operators, $E_1,F_1$ are finite-dimensional, and $g\colon E_1\to F_1$ is continuous.
  \label{th:main}
\end{theorem}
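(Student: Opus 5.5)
Fix a compact set $K\subseteq E$ and a continuous seminorm $q$ on $F$ together with $\e>0$; since such $q$ generate the topology of $F$, sets of this form give a neighbourhood base for the compact-open topology. The plan is to factor the problem into two independent approximations: first approximate on the output side by a finite-rank decoder, then approximate on the input side by a finite-dimensional encoder.

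\textbf{Output side.} The image $f(K)$ is compact in $F$. Cover it by finitely many $q$-balls of radius $\e/2$ centred at $y_1,\dots,y_n\in f(K)$. Choose a continuous partition of unity $\phi_1,\dots,\phi_n$ on $f(K)$ subordinate to these balls. Put $F_1=\R^n$ and let $S\colon\R^n\to F$ be $S(t)=\sum t_iy_i$. Then $S$ is a continuous linear map, since every linear map out of a finite-dimensional space is continuous. We get $q\bigl(f(x)-S(\phi(f(x)))\bigr)<\e/2$ for $x\in K$, by convexity of $q$-balls. Here $h=\phi\circ f|_K\colon K\to\R^n$ is continuous, and it remains to approximate $h$ uniformly on $K$ within $\delta$ in the sup norm. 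We take $\delta$ small enough that $q(S(t)-S(t'))<\e/2$ whenever $\norm{t-t'}_\infty<\delta$, which is possible because $q\circ S$ is a seminorm on $\R^n$ and hence bounded by $C\norm{\cdot}_\infty$.

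\textbf{Input side.} This is the key step: approximate a continuous real function on the compact set $K$ by functions of the form $g\circ T$ with $T\colon E\to\R^m$ continuous linear. By Hahn--Banach, $E'$ separates the points of the Hausdorff locally convex space $E$. Consider the algebra $\mathcal A$ of functions on $K$ of the form $p(\ell_1,\dots,\ell_m)|_K$, where $p$ is a polynomial and $\ell_j\in E'$. This is a unital subalgebra of $C(K)$ which separates points. Stone--Weierstrass then gives density of $\mathcal A$ in $C(K)$. Apply this to each of the $n$ coordinates $h_i$ to get polynomials $p_i$ in functionals. Collect all the functionals used into a single $T=(\ell_1,\dots,\ell_m)\colon E\to\R^m=E_1$, and set $g=(p_1,\dots,p_n)\colon\R^m\to\R^n$, which is continuous, indeed polynomial. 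Then $\sup_K\norm{h-g\circ T}_\infty<\delta$.

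\textbf{Combining.} For $x\in K$,
\[q\bigl(f(x)-S g T(x)\bigr)\le q\bigl(f(x)-S h(x)\bigr)+q\bigl(S h(x)-S gT(x)\bigr)<\e.\]
This proves the claim. No approximation property is needed, because all approximation happens through scalar functions and Stone--Weierstrass. The only points needing care are the use of continuous seminorms in place of norms, and Hausdorffness of $E$ for point separation. I expect the main subtlety to be that the partition of unity lives only on $f(K)$ rather than on all of $F$. This is harmless, since we only ever need $h$ on $K$, and $g$ is defined globally on $\R^m$.
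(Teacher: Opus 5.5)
Your proof is correct, and on the encoding side it takes a genuinely different route from the paper. The decoding side is the same idea in both: a finite partition of unity plus convexity of seminorm balls. The paper builds it on $K$, using uniform continuity of $f|_K$, with $F_1=\mathrm{span}\{f(x_i)\}$ and $S$ the inclusion. You build it directly on $f(K)$ with $S(t)=\sum_i t_iy_i$, which spares the uniform-continuity step.

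The real divergence is in constructing $T$ and $g$. The paper proceeds in several stages:
\begin{enumerate}
\item it reduces to $E=\lin K$;
\item it chooses countably many continuous prenorms $p_k$ controlling the oscillation of $\tilde f$ on $K$, and factors $\tilde f$ through $\varpi\colon E\to\prod_k E/p_k$;
\item it injects each separable quotient into $\ell^2$ via Banach--Mazur and transports everything to a compact subset of $\ell_2^{\N}$;
\item it extends by Kat\v etov's theorem and invokes the approximation property of the multi-Hilbert space $\ell_2^{\N}$, giving $T=\pi\circ i\circ\varpi$ and $g=\breve f|_{E_1}$.
\end{enumerate}
You replace this whole chain by Stone--Weierstrass for the algebra of cylinder polynomials $p(\ell_1,\dots,\ell_m)|_K$, $\ell_j\in E'$, which separates points of $K$ by Hahn--Banach.

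Your route is much shorter and uses only these two classical theorems. It works verbatim for non-metrizable compacta, with no passage to countably many prenorms or separable quotients. It even produces a polynomial $g$. It also makes transparent why no approximation property enters, which fits the authors' remark that the result may be folklore.

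The paper's route, in exchange, keeps the architecture of the AP-based argument of Kovachki et al. The encoder is a finite-rank approximate identity $\pi$ in an auxiliary space that does have AP. The core map $g$ is the restriction of a single map $\breve f$ that is uniformly continuous on all of $\ell_2^{\N}$. Hence any finite-rank $\pi$ sufficiently close to the identity on $i\varpi(K)$ works with the same $\breve f$. This is also the template of the smooth case, where the encoder must be an approximate finite-rank identity on $E$ itself. Your polynomial $g$ does not conflict with the necessity of AP there, since in the $C^k$ setting the derivatives must be approximated as well.
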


In particular, the theorem holds for arbitrary normed space $E$ and $F$. At the same time, many function spaces of central importance in theory of differential equations are non-normable locally convex spaces: various concrete Fr\'echet spaces, nuclear spaces, $(LF)$-spaces, and so on, see, for example, the monograph \cite{treves}.

Under certain regularity assumptions, the solutions to differential equations depend on the initial conditions in a smooth way. This, in our view, justifies extending the encoding/decoding result to the case of $C^k$ smooth mappings. While in the finite-dimensional case the notion of $C^k$ smoothness, as well as the topology on the space of such mappings -- the uniform convergence on compacta with $k$ first derivatives -- are uniquely defined, for the mappings between infinite-dimensional spaces there is a great variety of divergent concepts, see the monograph \cite{keller}. Following \cite{neeb}, we adopt the definition of $C^k$ mappings due to Michal \cite{michal} e Bastiani \cite{bastiani}, as well as the topology of uniform convergence on compacta together with $k$ derivatives. Here the situation is different from the continuous case. Recall that a locally convex space $E$ is said to have the {\em approximation property} if the identity mapping $\Id\colon E\to E$ is approximated, in the topology of uniform convergence on compacta, with continuous linear operators $\pi\colon E\to E$ of finite rank. 

\begin{theorem}[Finite encoding/decoding for neural operators, smooth case]
  For a complete locally convex space $E$, the following are equivalent.
  \begin{enumerate}
  \item
    For every locally convex space $F$, every $C^k$-smooth mapping $f\colon E\to F$, $k\geq 1$, is approximated in the topology of uniform convergence on compacta with the first $k$ derivaties by mappings of the form $S\circ g\circ T$, where $S\colon F_1\to F$ and $T\colon E\to E_1$ are bounded linear operators, $E_1,F_1$ are finite-dimensional, and $g\colon E_1\to F_1$ is a $C^k$ smooth mapping.
    \item $E$ has the approximation property.
      \end{enumerate}
  \label{th:main2}
\end{theorem}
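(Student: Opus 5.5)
(2)$\Rightarrow$(1): Let $f\colon E\to F$ be $C^k$, $K\subseteq E$ compact, and fix a basic neighbourhood of $f$ in the $C^k$ compact-open topology. In the Michal--Bastiani setting this is determined by continuous seminorms on $F$ and compact sets on which $d^jf(x)(h_1,\dots,h_j)$, $j\le k$, must be approximated. So we may take a compact $L\subseteq E$ (for example the closed absolutely convex hull of $K$ together with the increment vectors, which is compact because $E$ is complete) and require uniform closeness of $f$ and its iterated differentials on $K\times L^j$. By the approximation property there is a finite-rank continuous operator $\pi$ that is uniformly close to $\Id$ on $L$. Write $\pi=\iota\circ T$ with $T\colon E\to E_1=\pi(E)$ and $\iota\colon E_1\to E$ the inclusion.

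Next consider $f\circ\pi$. By the chain rule, $d^j(f\circ\pi)(x)(h_1,\dots,h_j)=d^jf(\pi x)(\pi h_1,\dots,\pi h_j)$. The map $(x,h)\mapsto d^jf(x)(h)$ is continuous, hence uniformly continuous on compacta. Choosing $\pi$ close enough to $\Id$ on the compact $L$ (which contains $K$, so $\pi x$ and $\pi h_i$ stay in a fixed compact neighbourhood region) therefore makes $f\circ\pi$ close to $f$ in the given $C^k$ neighbourhood. To justify this uniform continuity I would use the compactness of $L\cup\pi(L)$, possibly enlarged by a convex hull.

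Now $f\circ\iota\colon E_1\to F$ is a $C^k$ map on a finite-dimensional space, and it remains to approximate it on the compact $T(L)$ in $C^k$ by maps factoring through a finite-dimensional $F_1$. I would first multiply by a smooth cutoff and then use a $C^k$ partition of unity (or convolution) on $E_1\cong\R^m$. This gives $\sum_i\varphi_i(y)\,f(\iota y_i)$ plus Taylor-type corrections $\sum_i\varphi_i(y)\,d^jf(\iota y_i)(\dots)$. For instance, convolution $f\circ\iota*\rho_\delta$ approximated by Riemann sums takes values in the span of finitely many vectors $d^\alpha$-values of $f$. A cleaner route is to observe that $C^k(\R^m)\otimes F$ is dense in $C^k(\R^m,F)$ for the compact-open $C^k$ topology: a map $\sum_i\varphi_i\otimes v_i$ has exactly the form $S\circ g$ with $F_1=\R^n$, $g=(\varphi_i)$ and $S(e_i)=v_i$. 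This density can be proved with the Bernstein/Weierstrass polynomial approximation for $C^k$ functions applied componentwise, or via the convolution--Riemann-sum argument where $F$-valued integrals are approximated by finite sums uniformly with derivatives. I expect this step to need care only when $F$ is incomplete, and Riemann sums avoid integrals entirely.

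(1)$\Rightarrow$(2): Apply (1) with $F=E$ and $f=\Id$, which is $C^\infty$. We get $S\circ g\circ T$ whose derivative at $0$ is close to $\Id$ uniformly on a given compact $K$, i.e.\ $d(S g T)(0)(h)=S\,dg(T0)\,T h$ is uniformly close to $h$ on $K$. This derivative is a continuous linear operator of finite rank, because it factors through $F_1$. Hence $E$ has the approximation property. The main obstacle is the first half, specifically controlling all $k$ derivatives of $f\circ\pi-f$ uniformly. This is where the approximation property is genuinely needed; in the continuous case of Theorem \ref{th:main}, $\pi$ can instead be nonlinear, built from partitions of unity.
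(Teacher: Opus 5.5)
Your proposal is correct and follows essentially the paper's proof. For (1)$\Rightarrow$(2) you apply the hypothesis to $f=\Id_E$ on $\{0\}\times K$ and note that $D^1(SgT)(0)=S\circ D^1g(0)\circ T$ has finite rank; for (2)$\Rightarrow$(1) you take a finite-rank $\pi$ from the approximation property, use $D^i(f\circ\pi)(x)(v_1,\ldots,v_i)=D^if(\pi x)(\pi v_1,\ldots,\pi v_i)$, and invoke density of $C^k(E_1,\R)\otimes F$ in $C^k(E_1,F)$ (Theorem \ref{th:treves}). The one step to tighten is the uniform-continuity estimate: working on the compact set $L\cup\pi(L)$ is circular, because the modulus would depend on $\pi$; instead apply Lemma \ref{l:uniformnbhd} to $D^if$ at the compact set $K\times K^i$, which gives a neighbourhood $U$ of zero, chosen before $\pi$, with $D^if(x)(v_1,\ldots,v_i)-D^if(x+u_0)(v_1+u_1,\ldots,v_i+u_i)\in V$ for all $u_j\in U$ (this is what the paper does, and it needs no convex hulls or compact neighbourhoods).
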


In Section \ref{s:background}, we collect various results and concepts from topology, functional analysis, and smooth calculus in infinite dimensions that will be used in the proofs. Section \ref{s:proofs} contains the proof of Theorem \ref{th:main}. First, we give a separate proof for the particular case where $E$ and $F$ are arbitrary normed spaces, because it is technically simpler, yet gives the gist of the proof in the general case. Some readers may just want to stop there; others may wish to go directly to the proof of the general locally convex situation, which is independent. Theorem \ref{th:main2} is proved in Section \ref{s:smooth}.

Not being sure if Theorem \ref{th:main} is already known, we have contacted two among the top experts in modern linear and non-linear functional analysis. Gilles Godefroy, who believes Theorem \ref{th:main} to be a folklore result due to its generality, nevertheless
referred us for a definitive opinion to William Johnson, who informed us he has never seen the result before (and suggested a simpler proof of a part of it). We are grateful to both mathematicians.

In truth, most of the techniques used in the proofs are completely standard, except maybe the construction of the finite-dimensional space $E_1$ in the continuous case which avoids the use of approximation property. Upon reflection, the authors have decided that publishing the results is justified and could benefit the fast growing neural operator community at this point in time.

\section{Background from functional analysis and topology\label{s:background}}

\subsection{Some notions and results of general topology}

All the topological spaces appearing in our note are Hausdorff, that is, given two points $x,y$, $x\neq y$, there exist disjoint neighbourhoods $V_x,V_y$ of $x,y$ respectively.

Recall that a topological space $X$ is {\em separable} if it contains a countable subset which is dense, that is, whose closure is all of $X$.

\begin{proposition}[Th. 4.1.18 in \cite{engelking}]
  A compact metrizable space is separable.
  \label{p:sep}
 \end{proposition}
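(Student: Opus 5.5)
The statement immediately above is Proposition~\ref{p:sep}: a compact metrizable space $X$ is separable. Fix a compatible metric $d$ on $X$, and for each $n\in\N$ look at the open cover of $X$ by all open balls $B(x,1/n)$ with $x\in X$. Compactness gives a finite subcover, so there is a finite set $A_n\subseteq X$ such that every point of $X$ lies within distance $1/n$ of some point of $A_n$. In other words, each $A_n$ is a finite $1/n$-net, and a compact metric space is totally bounded.

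Next, set $A=\bigcup_{n\in\N}A_n$. As a countable union of finite sets, $A$ is countable. To show $A$ is dense, take any $x\in X$ and any nonempty open neighbourhood $U$ of $x$. Since $d$ induces the topology, $U$ contains a ball $B(x,\e)$ for some $\e>0$. Choose $n$ with $1/n<\e$. Some $a\in A_n$ satisfies $d(x,a)<1/n<\e$, so $a\in U\cap A$. Every open set around every point therefore meets $A$, which means the closure of $A$ is all of $X$.

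There is no real obstacle here. The only point needing care is that the metric must induce the given topology, which is what metrizability guarantees. This is what lets us pass between the metric balls used for the covering and the topological closure in the definition of separability.
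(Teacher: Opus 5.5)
Your proposal is correct and follows the paper's own argument: for each $k$, take the centres of a finite cover of $X$ by open balls of radius $1/k$, and observe that the union of these finite sets is countable and dense. You simply write out the density verification that the paper leaves implicit.
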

(Just take the union of centres of finite families of open balls of radii $1/k$ covering $X$, for all $k\geq 1$.)

A continuous mapping $f\colon X\to Y$ is called {\em closed} if the image $f(F)$ of every closed set $F\subseteq X$ is closed in $Y$. In particular,

\begin{proposition}[Th. 3.1.12 in \cite{engelking}]
  Every continuous map from a compact space $X$ to a Hausdorff space $Y$ is closed.
  \label{p:closed}
\end{proposition}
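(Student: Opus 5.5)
The plan is the standard argument. Let $f\colon X\to Y$ be continuous, with $X$ compact and $Y$ Hausdorff, and let $F\subseteq X$ be closed. The goal is to show $f(F)$ is closed in $Y$, in three steps.

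\emph{Step 1: $F$ is compact.} Take an open cover $\{U_\alpha\}$ of $F$ by sets open in $X$. Adjoin the open set $X\setminus F$; this gives an open cover of $X$. By compactness of $X$ it has a finite subcover. Discarding $X\setminus F$ if it appears leaves finitely many $U_\alpha$ covering $F$.

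\emph{Step 2: $f(F)$ is compact.} If $\{V_\beta\}$ is an open cover of $f(F)$ in $Y$, then by continuity the sets $f^{-1}(V_\beta)$ form an open cover of $F$. By Step 1 finitely many of them suffice, and the corresponding $V_\beta$ cover $f(F)$.

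\emph{Step 3: compact subsets of a Hausdorff space are closed.} This is the only step that really uses the Hausdorff assumption. Let $C=f(F)$ and fix $y\notin C$. For each $c\in C$, Hausdorffness gives disjoint open sets $W_c\ni c$ and $V_c\ni y$. The $W_c$ cover $C$, so by compactness finitely many $W_{c_1},\dots,W_{c_n}$ already cover $C$. Then $\bigcap_{i=1}^n V_{c_i}$ is an open neighbourhood of $y$ disjoint from $\bigcup_i W_{c_i}\supseteq C$. Hence $Y\setminus C$ is open, and $C=f(F)$ is closed.

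The only mildly delicate point is Step 3, specifically the need to pass to a finite subcover before intersecting the neighbourhoods $V_{c_i}$. An infinite intersection of open sets need not be open, so the finiteness is essential. Everything else is routine.
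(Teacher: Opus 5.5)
Your proof is correct and follows essentially the same route as the paper's one-line argument: a closed subset of a compact space is compact, its continuous image is compact, and a compact subset of a Hausdorff space is closed. The only difference is that you prove each of these three standard facts explicitly, whereas the paper simply cites them.
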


Indeed, if $F\subseteq X$ is closed, it is compact, hence its image under the continuous map $f$ is compact in $Y$, and since $Y$ is Hausdorff, it is closed.

A surjective map $f\colon X\to Y$ between two topological spaces is a {\em quotient map} if a subset $F\subseteq Y$ is closed in $Y$ if and only if $f^{-1}(Y)$ is closed in $X$ (see \cite{engelking}, p. 91). In particular, a quotient map is continuous. 

\begin{proposition}[Corol. 2.4.8, \cite{engelking}]
  Every surjective closed map is a quotient map.
  \label{p:quotient}
\end{proposition}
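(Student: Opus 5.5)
The statement to prove is Proposition \ref{p:quotient}: a surjective continuous map $f\colon X\to Y$ which is closed is a quotient map. (In the definition above, the preimage should read $f^{-1}(F)$ rather than $f^{-1}(Y)$.) The plan is to check the two directions of the defining equivalence: for $F\subseteq Y$, the set $F$ is closed in $Y$ if and only if $f^{-1}(F)$ is closed in $X$.

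One direction needs only continuity. If $F\subseteq Y$ is closed, then $f^{-1}(F)$ is closed in $X$, because preimages of closed sets under a continuous map are closed. Here we use the convention that a closed map is, by definition, continuous.

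For the converse, suppose $f^{-1}(F)$ is closed in $X$. Since $f$ is closed, the image $f(f^{-1}(F))$ is closed in $Y$. The key step is the set-theoretic identity
\[
f(f^{-1}(F)) = F .
\]
The inclusion $f(f^{-1}(F))\subseteq F$ holds for any map. For the reverse inclusion, take $y\in F$; by surjectivity there is $x\in X$ with $f(x)=y$, so $x\in f^{-1}(F)$ and $y\in f(f^{-1}(F))$. Hence $F$ is closed in $Y$.

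There is no real obstacle. The only point needing care is that surjectivity is used exactly in this identity; without it, the closed set $f(f^{-1}(F))=F\cap f(X)$ need not equal $F$. Combined with Proposition \ref{p:closed}, the result shows that every continuous surjection from a compact space onto a Hausdorff space is a quotient map, which is presumably how it will be used later.
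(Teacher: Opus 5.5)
Your proof is correct and matches the paper's argument. The nontrivial direction is exactly the identity $f(f^{-1}(F))=F$, which follows from surjectivity, combined with closedness of $f$; you additionally spell out the easy direction via continuity, which the paper builds into its definition of a closed map, and you rightly flag the typo $f^{-1}(Y)$ for $f^{-1}(F)$ in the paper's definition of a quotient map.
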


Indeed, if $F\subseteq Y$ is such that $f^{-1}(F)$ is closed in $X$, then, by surjectivity, $F=f(f^{-1}(F))$, so $F$ is closed in $Y$.

A bijective quotient map is clearly a homeomorphism (has a continuous inverse), see Corol. 2.4.7 in \cite{engelking}. So in particular we have the following well-known and useful observation.

\begin{proposition}[Th. 3.1.13, \cite{engelking}]
  A continuous injective map $f$ from a compact space $X$ onto a Hausdorff topological space $Y$ is a homeomorphism, that is, has a continuous inverse.
  \label{p:homeo}
\end{proposition}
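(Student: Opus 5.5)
The argument combines Proposition \ref{p:closed} and Proposition \ref{p:quotient}. Let $f\colon X\to Y$ be continuous and injective, with $X$ compact, $Y$ Hausdorff, and $f(X)=Y$.

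First, by Proposition \ref{p:closed}, $f$ is a closed map, since every continuous map from a compact space to a Hausdorff space is closed. The map is also surjective onto $Y$ by hypothesis. Proposition \ref{p:quotient} then shows that $f$ is a quotient map.

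Second, we show that a bijective quotient map is a homeomorphism. Since $f$ is injective as well, it has a set-theoretic inverse $g=f^{-1}\colon Y\to X$. To see that $g$ is continuous, take a closed set $C\subseteq X$. Its preimage under $g$ is $g^{-1}(C)=f(C)$, and this set is closed in $Y$ because $f$ is a closed map. Therefore $g$ is continuous and $f$ is a homeomorphism.

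In fact the closedness of $f$ alone already yields continuity of the inverse, so the detour through quotient maps can be skipped. There is no real obstacle here. The only care needed is in the bookkeeping: $g^{-1}(C)$ equals $f(C)$ precisely because $f$ is bijective. The substantive input, namely the closedness of $f$, is exactly the compactness-plus-Hausdorff argument already given after Proposition \ref{p:closed}.
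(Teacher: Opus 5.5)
Your proof is correct and follows the same chain the paper gives in the text around this statement: Proposition \ref{p:closed} gives closedness, Proposition \ref{p:quotient} gives a quotient map, and a bijective quotient map is a homeomorphism. As you note yourself, your second paragraph actually proves that a bijective closed map has a continuous inverse, via $g^{-1}(C)=f(C)$, so the quotient-map step is redundant but harmless.
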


\begin{proposition}[Prop. 2.4.2, \cite{engelking}]
  Let $f\colon X\to Y$ be a quotient map and $g\colon Y\to Z$ be such that $gf\colon X\to Z$ is continuous. Then $g$ is continuous.
  \label{p:3maps}
\end{proposition}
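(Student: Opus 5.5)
The statement is Proposition \ref{p:3maps}: $f\colon X\to Y$ is a quotient map, $g\colon Y\to Z$ is a map such that $gf$ is continuous, and we must show that $g$ is continuous. The plan is to verify continuity of $g$ through closed sets, since the definition of a quotient map is phrased in terms of closed sets.

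Fix a closed set $C\subseteq Z$; we must show that $g^{-1}(C)$ is closed in $Y$. The key identity is
\[
f^{-1}\bigl(g^{-1}(C)\bigr)=(gf)^{-1}(C).
\]
Since $gf$ is continuous and $C$ is closed, the right-hand side is closed in $X$. Thus $f^{-1}$ of the set $F:=g^{-1}(C)$ is closed in $X$.

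Now apply the defining property of a quotient map: a subset $F\subseteq Y$ is closed if and only if $f^{-1}(F)$ is closed in $X$. (The paper writes $f^{-1}(Y)$ in the definition, but this is evidently a typo for $f^{-1}(F)$.) Using the ``if'' direction, $F=g^{-1}(C)$ is closed in $Y$. Since $C$ was an arbitrary closed subset of $Z$, $g$ is continuous.

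There is no real obstacle here. The only point needing care is to use the correct direction of the quotient criterion, namely that closedness of the preimage implies closedness in $Y$. Surjectivity of $f$ is part of the definition but is not needed beyond what that criterion already provides.
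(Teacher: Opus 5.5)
Your proof is correct and is exactly the paper's argument: for a closed $C\subseteq Z$, the identity $f^{-1}(g^{-1}(C))=(gf)^{-1}(C)$ shows this preimage is closed in $X$, and the quotient property then gives that $g^{-1}(C)$ is closed in $Y$. You are also right that $f^{-1}(Y)$ in the paper's definition of a quotient map is a typo for $f^{-1}(F)$.
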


\begin{proof}
  If $F\subseteq Z$ is closed, then $f^{-1}(g^{-1}(F)) = (gf)^{-1}(F)$ is closed in $X$. Since $f$ is a quotient map, $g^{-1}(F)$ is closed in $Y$.
\end{proof}

A ({\em continuous}) {\em partition of unity} on a topological space $X$ is a family of continuous maps $h_i\colon X\to [0,1]$, indexed by some index set $I$, having the property that $\sum_{i\in I}h_i(x)=1$ for each $x\in X$. 

An cover $\gamma$ of a topological space $X$ is called {\em locally finite} if every point $x\in X$ has a neighbourhood $V$ meeting only finitely many elements of $\gamma$. In particular, every finite cover is locally finite, and this is the only case of interest to us.

We will only apply the following result in the case where $X$ is compact, so will not discuss the more general concept of paracompactness, for which notion we refer to Chapter V in \cite{engelking}. Suffices only to mention that each compact space is paracompact ({\em ibid.}, Th. 5.1.1 on p. 300).

\begin{theorem}
  Let $\gamma$ be a locally finite open cover of a paracompact Hausdorff topological space. There exists a partition of unity, $h_V$, $V\in\gamma$, indexed with elements of $\gamma$ and such that for every $V\in \gamma$, the function $h_V$ vanishes outside of $V$.
  \label{th:unity}
\end{theorem}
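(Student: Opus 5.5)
The plan is the classical route: show $X$ is normal, shrink the cover to a closed cover subordinate to it, apply Urysohn's lemma to each member, and normalise by the sum. Write $\gamma=\{V_\alpha\}_{\alpha<\kappa}$, well-ordered by an ordinal $\kappa$.

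\emph{Step 1: a paracompact Hausdorff space $X$ is normal.} We use the fact that the union of a locally finite family of closed sets is closed; equivalently, the closure of a locally finite union is the union of the closures.

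First, regularity. Let $x\notin C$ with $C$ closed. For each $y\in C$, the Hausdorff property gives an open $U_y\ni y$ with $x\notin\overline{U_y}$. Take a locally finite open refinement $\delta$ of the cover $\{U_y\}_{y\in C}\cup\{X\setminus C\}$, and let $W$ be the union of the members of $\delta$ meeting $C$. Then $W\supseteq C$, and $\overline W$ is the union of the closures of those members. Each such closure lies in some $\overline{U_y}$, so $x\notin\overline W$.

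Normality follows by repeating the same argument with the point $x$ replaced by a closed set $D$ disjoint from $C$, now using regularity to choose the sets $U_y$.

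\emph{Step 2: shrinking lemma.} We construct by transfinite recursion closed sets $F_\alpha\subseteq V_\alpha$ such that, for every $\alpha$, the family $\{F_\beta\}_{\beta\le\alpha}\cup\{V_\beta\}_{\beta>\alpha}$ covers $X$.

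At stage $\alpha$, put
\[A=X\setminus\Bigl(\bigcup_{\beta<\alpha}F_\beta\cup\bigcup_{\beta>\alpha}V_\beta\Bigr).\]
The family $\{F_\beta\}_{\beta<\alpha}$ is locally finite, because $F_\beta\subseteq V_\beta$ and $\gamma$ is locally finite. Hence its union is closed, and so $A$ is closed.

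We also need $A\subseteq V_\alpha$, i.e.\ that $\{F_\beta\}_{\beta<\alpha}\cup\{V_\beta\}_{\beta\ge\alpha}$ still covers $X$. This is the delicate point at limit stages. Fix $x$. It lies in only finitely many $V_\beta$, so let $\beta_0$ be the largest such index. If $\beta_0\ge\alpha$, then $x\in V_{\beta_0}$ and we are done. If $\beta_0<\alpha$, the covering property at stage $\beta_0$ puts $x$ in some $F_\beta$ with $\beta\le\beta_0<\alpha$.

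Now normality applied to the closed set $A$ and the open set $V_\alpha$ gives an open $U$ with $A\subseteq U\subseteq\overline U\subseteq V_\alpha$. Set $F_\alpha=\overline U$. The same maximal-index argument at the end shows that $\{F_\alpha\}_{\alpha<\kappa}$ covers $X$.

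\emph{Step 3: Urysohn functions and normalisation.} By Urysohn's lemma in the normal space $X$, choose continuous $g_V\colon X\to[0,1]$ with $g_V=1$ on $F_V$ and $g_V=0$ outside $V$.

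Set $g=\sum_{V\in\gamma}g_V$. Near each point, local finiteness of $\gamma$ makes this a finite sum of continuous functions, so $g$ is continuous. Since the $F_V$ cover $X$, we have $g\ge 1$ everywhere.

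Then $h_V=g_V/g$ is continuous with values in $[0,1]$, vanishes outside $V$, and satisfies $\sum_V h_V=1$. This is the required partition of unity.

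The main obstacle is Step 2, specifically the covering property at limit ordinals. It is exactly there that local finiteness of $\gamma$ (indeed point-finiteness) is indispensable, via the maximal-index argument above.
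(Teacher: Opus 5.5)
Your outline (Dieudonn\'e's normality argument, a closed shrinking of $\gamma$, Urysohn functions, normalisation by $g=\sum_V g_V$) is the standard argument behind the proof of (i)$\Rightarrow$(ii) of Engelking's Theorem~5.1.9. The paper gives no proof of its own and simply defers to that. Steps~1 and~3 are fine.

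\textbf{The gap: Step~2, where you claim $A$ is closed.} Local finiteness does make $\bigcup_{\beta<\alpha}F_\beta$ closed. But that makes its complement open, so
$A=\bigl(X\setminus\bigcup_{\beta<\alpha}F_\beta\bigr)\cap\bigl(X\setminus\bigcup_{\beta>\alpha}V_\beta\bigr)$
is only the intersection of an open set and a closed set. For $A$ to be closed, the removed set would have to be open.

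This is more than a misstated justification. With the invariant phrased through the closed sets $F_\beta$, the recursion can get stuck. Take $X=[0,3]$, $V_0=[0,2)$, $V_1=(1,2.2)$, $V_2=(1.6,3]$.
\begin{itemize}
\item At stage $0$, $A=[0,1]$, and $U=[0,1.6)$ is an admissible choice, so $F_0=[0,1.6]$.
\item At stage $1$, $A=X\setminus(F_0\cup V_2)=\emptyset$, so $U=\emptyset$ and $F_1=\emptyset$ are admissible.
\end{itemize}
Your covering invariant holds at both stages. At stage $2$, however, $A=X\setminus(F_0\cup F_1)=(1.6,3]=V_2$. Your maximal-index argument still correctly gives $A\subseteq V_2$, but $A$ is not closed, and no closed $F_2\subseteq V_2$ contains it.

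\textbf{The repair: keep the open sets in the invariant.} Require $\{U_\beta\}_{\beta\le\alpha}\cup\{V_\beta\}_{\beta>\alpha}$ to cover $X$, where $A_\beta\subseteq U_\beta\subseteq\overline{U_\beta}\subseteq V_\beta$ at each earlier stage. Put $A=X\setminus\bigl(\bigcup_{\beta<\alpha}U_\beta\cup\bigcup_{\beta>\alpha}V_\beta\bigr)$. Then:
\begin{itemize}
\item $A$ is closed, as the complement of an open set; no local finiteness is needed here.
\item Your maximal-index argument goes through verbatim with $U_\beta$ in place of $F_\beta$, giving $A\subseteq V_\alpha$.
\item Normality yields $U_\alpha\supseteq A$ with $\overline{U_\alpha}\subseteq V_\alpha$, which restores the invariant.
\item At the end the $U_\alpha$ cover $X$, hence so do the $F_\alpha=\overline{U_\alpha}$.
\end{itemize}

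Finally, Step~2 uses only point-finiteness of $\gamma$. Local finiteness is genuinely needed only in Step~3, for the continuity of $g$.
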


See Theorem 5.1.9 in \cite{engelking}. Even if the result is not stated in exactly this form, the above affirmation follows from the proof of the implication (i)$\Rightarrow$(ii), on top of the page 302. 

\subsection{Normed and Hilbert spaces}

All vector spaces in this work will be over the field of real numbers.
Recall that $C[0,1]$ denotes the Banach space consisting of all continuous real-valued functions on the closed unit interval, equipped with the uniform (supremum) norm.

\begin{theorem}[Banach--Mazur theorem, \cite{banach}, p. 185]
Every separable normed space is isometrically isomorphic to a normed subspace of $C[0,1]$. 
\end{theorem}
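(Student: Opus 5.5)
The plan is the classical Banach route: first realize the space as a space of continuous functions on a compact metric space, and then transport that onto $[0,1]$. Let $X$ be a separable normed space and $B^*$ the closed unit ball of the dual $X^*$ with the weak$^*$ topology. By Banach--Alaoglu, $B^*$ is compact. Fix a countable dense subset $\{x_n\}$ of $X$, which exists by separability. Then
\[
d(\phi,\psi)=\sum_n 2^{-n}\frac{\abs{\phi(x_n)-\psi(x_n)}}{1+\norm{x_n}}
\]
is a metric on $B^*$. Its topology is coarser than the weak$^*$ topology, and it is Hausdorff because functionals in $B^*$ agreeing on a dense set coincide. By Proposition \ref{p:homeo}, the identity map from $(B^*,w^*)$ to $(B^*,d)$ is therefore a homeomorphism. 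So $K=B^*$ is a compact metrizable space.

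Next I embed $X$ into $C(K)$ by $x\mapsto \hat x$, where $\hat x(\phi)=\phi(x)$. Each $\hat x$ is weak$^*$ continuous, and the map is clearly linear. It is isometric by Hahn--Banach: for every $x$ there is $\phi\in B^*$ with $\phi(x)=\norm{x}$, and $\abs{\phi(x)}\le\norm{x}$ for all $\phi\in B^*$.

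It remains to embed $C(K)$ isometrically into $C[0,1]$ for an arbitrary compact metric space $K$. I would use the standard continuous surjection $p$ from the Cantor set $\Delta\subseteq[0,1]$ onto $K$. To get it, embed $K$ into the Hilbert cube $[0,1]^{\N}$. The map $\Delta\to[0,1]$ sending binary-type ternary expansions to binary values is a continuous surjection. Taking countably many copies, $\Delta\cong\Delta^{\N}$ maps onto $[0,1]^{\N}$. This gives a continuous surjection from a closed subset $C\subseteq\Delta$ onto $K$. Since closed subsets of $\Delta$ are retracts of $\Delta$, via nearest-point choices in the ultrametric structure, the surjection extends to all of $\Delta$.

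Then I define $T\colon C(K)\to C[0,1]$ by taking $f\circ p$ on $\Delta$ and extending linearly across each complementary open interval of $\Delta$. This $T$ is linear, and it preserves the sup norm because $p$ is surjective and linear interpolation does not raise the maximum. Composing the two maps gives the isometric embedding of $X$ into $C[0,1]$.

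The main obstacle is constructing the surjection of the Cantor set onto an arbitrary compact metric space. I will invoke it as the Alexandroff--Hausdorff theorem, with only the sketch above, rather than proving it in detail.
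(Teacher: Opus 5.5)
The paper gives no proof of this statement. It quotes it as a classical result, citing Banach's book and pointing to Kirillov--Gvishiani (Th.~24) and H\'ajek et al. (Ch.~2), and uses it only to get Corollary~\ref{c:iell2}. So there is nothing internal to compare against. Your argument is correct and is the classical proof. Your metric $d$ together with Proposition~\ref{p:homeo} makes the weak$^*$-compact ball $B^*$ a compact metric space. The map $x\mapsto\hat x$ is a linear isometry into $C(B^*)$ by the norming form of Hahn--Banach. Composing with a continuous surjection $p\colon\Delta\to B^*$ and interpolating linearly over the gaps of $\Delta$ then gives a linear isometry $C(B^*)\to C[0,1]$.

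Two details need a line more when you write it out.

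First, the retraction of $\Delta$ onto a closed $C\neq\emptyset$ cannot use an arbitrary choice of nearest point. For $x\notin C$, choose $r(x)\in C\cap\{y\colon d(x,y)\le d(x,C)\}$ as a function of that ball only. In an ultrametric, this ball and the value $d(\cdot,C)$ are the same for every $y$ with $d(x,y)<d(x,C)$, so $r$ is locally constant off $C$. Continuity at points of $C$ then follows from $d(x,r(x))=d(x,C)$. Alternatively, skip the retraction entirely: interpolate over the bounded gaps of $C$ and extend by constants outside $[\min C,\max C]$.

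Second, you need continuity of $Tf$ at a point $t\in\Delta$ that is not the left endpoint of a gap. For $t'\in\Delta$ slightly larger than $t$, every gap meeting $(t,t')$ lies inside $[t,t']$, and the same holds symmetrically on the left. Combined with uniform continuity of $f\circ p$, this gives continuity at $t$.

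Finally, the paper only needs a continuous injective linear operator into a separable Hilbert space, and your first two steps already give one without the Cantor-set machinery. Pick $\phi_n\in B^*$ with $\phi_n(x_n)=\norm{x_n}$ and set $x\mapsto\left(2^{-n/2}\phi_n(x)\right)_{n\ge 1}\in\ell^2$. This map has norm at most $1$. It is injective: if $\phi_n(x)=0$ for all $n$, then $\norm{x_n}=\phi_n(x_n-x)\le\norm{x-x_n}$, so $\norm{x}\le 2\norm{x-x_n}$ for every $n$, which forces $x=0$ by density.
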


A more modern presentation of this classical result can be found, for example, in \cite{KG}, Th. 24, or else \cite{HMVZ}, Ch. 2.

Note that the identity linear operator $C[0,1]\to L^2(0,1)$ has norm $1$. Given a separable normed space $E$, we can compose the Banach--Mazur isometric linear embedding $i\colon E\hookrightarrow C[0,1]$ with the identity operator $C[0,1]\to L^2(0,1)$, to obtain:

\begin{corollary}
  Every separable normed space admits an injective linear operator of operator norm $1$ into a separable Hilbert space.
  \label{c:iell2}
\end{corollary}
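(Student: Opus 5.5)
The corollary follows by a direct composition argument; the steps are as follows. Let $E$ be a separable normed space. By the Banach--Mazur theorem there is a linear isometric embedding $i\colon E\hookrightarrow C[0,1]$. Let $j\colon C[0,1]\to L^2(0,1)$ be the map sending a continuous function to its class in $L^2(0,1)$. The candidate operator is $T=j\circ i\colon E\to L^2(0,1)$. It remains to check three things: that $T$ is injective, that $\norm{T}=1$ (unless $E=\{0\}$, where any operator works trivially), and that $L^2(0,1)$ is a separable Hilbert space.

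For the norm, I would use the estimate $\norm{\phi}_{L^2}^2=\int_0^1\abs{\phi(t)}^2\,dt\leq \norm{\phi}_\infty^2$, so $\norm{j}\leq 1$. Taking $\phi\equiv 1$ shows $\norm{j}=1$ on $C[0,1]$. Equality for the composition $j\circ i$ is not automatic, because $i(E)$ need not contain the constants. If $E\neq\{0\}$ we still get $\norm{T}\leq 1$, and I would then rescale: replace $T$ by $T/\norm{T}$, which is legitimate since $T\neq 0$. Injectivity is unaffected by rescaling, so this gives operator norm exactly $1$.

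For injectivity, $i$ is injective because it is an isometry. The map $j$ is injective because a continuous function on $[0,1]$ vanishing almost everywhere vanishes identically: if $\phi(t_0)\neq 0$, then $\abs{\phi}$ is bounded below on a neighbourhood of $t_0$ of positive measure, so $\norm{\phi}_{L^2}>0$. The composition is therefore injective.

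Finally, $L^2(0,1)$ is separable, since polynomials with rational coefficients form a countable set that is dense in $C[0,1]$ by Weierstrass, and $C[0,1]$ is in turn dense in $L^2(0,1)$. None of these steps is a real obstacle. The only point needing care is the exact value of the norm of the composite, which the rescaling handles.
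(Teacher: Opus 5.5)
Your proof is correct and follows the same route as the paper, which composes the Banach--Mazur isometric embedding $E\hookrightarrow C[0,1]$ with the norm-one inclusion $C[0,1]\to L^2(0,1)$. Your rescaling step is a small improvement: the paper's argument only gives $\norm{j\circ i}\leq 1$ (for $E=\R$ embedded by $t\mapsto t\phi$ with $\phi(s)=s$, the composite has norm $1/\sqrt 3$), and the only case rescaling cannot fix is $E=\{0\}$, where the sole operator is $0$, of norm $0$, so ``any operator works'' is not quite right there, though the paper also ignores this degenerate case.
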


The following result is very well-known, see e.g. p. 108 in \cite{schaefer}.

\begin{theorem}[Approximation Property of the Hilbert space]
  Let $\pi_n$ be a sequence of orthogonal projections in a separable Hilbert space $\H$ whose ranges form an increasing chain with dense union in $\H$. For every compact $K\subseteq \H$ and each $\e>0$, there is $N$ so that for all $n\geq N$ and each $x\in K$, $\norm{\pi_n(x)-x}<\e$.
  \label{th:approx}
\end{theorem}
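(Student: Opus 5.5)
The statement just above is Theorem~\ref{th:approx}, the approximation property of a separable Hilbert space $\H$ with respect to an increasing chain of orthogonal projections. I will prove it by first establishing pointwise convergence $\pi_n(x)\to x$ for every $x\in\H$, and then upgrading this to uniform convergence on the compact set $K$. The upgrade uses total boundedness of $K$ together with the equicontinuity that comes from $\norm{\pi_n}\le 1$.

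\emph{Step 1: pointwise convergence.} Write $H_n=\pi_n(\H)$. These ranges increase with $n$, and their union $\bigcup_n H_n$ is dense in $\H$. Fix $x\in\H$ and $\e>0$. By density there is some $m$ and some $y\in H_m$ with $\norm{x-y}<\e$. For every $n\ge m$ we have $y\in H_m\subseteq H_n$. Since $\pi_n(x)$ is the nearest point of $H_n$ to $x$, this gives
\[\norm{x-\pi_n(x)}\le\norm{x-y}<\e.\]
Hence $\pi_n(x)\to x$ for every $x$. The inequality is in fact monotone in $n$, because the $H_n$ are nested.

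\emph{Step 2: uniformity on $K$.} Let $K\subseteq\H$ be compact and $\e>0$. Choose a finite $\e/3$-net $x_1,\dots,x_p$ in $K$. By Step~1 there is an $N$ such that for all $n\ge N$ and all $i\le p$ we have $\norm{\pi_n(x_i)-x_i}<\e/3$. Now take any $x\in K$, pick $x_i$ with $\norm{x-x_i}<\e/3$, and use that orthogonal projections have norm at most $1$:
\[\norm{\pi_n(x)-x}\le\norm{\pi_n(x-x_i)}+\norm{\pi_n(x_i)-x_i}+\norm{x_i-x}<\e/3+\e/3+\e/3=\e.\]
This holds uniformly in $x\in K$ for all $n\ge N$, which is the claim.

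Neither step presents a real obstacle. The only point needing care is Step~1. There one must use the nesting of the ranges, so that a single approximant $y\in H_m$ remains available for every $n\ge m$, and the best-approximation property of orthogonal projections. Everything else is the standard $\e/3$ argument, which relies on the uniform bound $\norm{\pi_n}\le 1$.
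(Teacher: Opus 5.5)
Your proof is correct and follows essentially the same route as the paper: pick a finite net for $K$, use nesting plus density of the ranges to make the net points uniformly close to $\pi_n(\H)$ for all $n\ge N$, and invoke the best-approximation property of orthogonal projections. The only cosmetic difference is how the estimate passes from the net to all of $K$: the paper uses the $1$-Lipschitz distance function $x\mapsto \mathrm{dist}(x,\pi_n(\H))=\norm{x-\pi_n(x)}$, so an $\e/2$-net suffices, whereas you use the bound $\norm{\pi_n}\le 1$ in an $\e/3$ argument.
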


Select a finite subset $A\subseteq\H$ forming an $\e/2$-net for $K$, that is, for $x\in K$ there is $a\in A$ with $\norm{x-a}<\e/2$. Choose $N$ so that for all $n\geq N$ and for each $a\in A$ the distance from $a$ to the range of $\pi_n$ is less than $\e/2$.
Since $\pi_n(x)$ is the closest point to $x$ in the range of $\pi_n$, for all $x\in K$ we have $\norm{x-\pi_n(x)}<\e$.

\subsection{Locally convex spaces}
Some standard references to the theory of locally convex spaces are \cite{schaefer,jarchow}, see also \cite{treves2}. Here is a brief recap of the few basic notions and results we will need.

A vector space equipped with a Hausdorff topology is called a {\em topological vector space} if the addition and scalar multiplication are jointly continuous. A topological vector space is called a {\em locally convex space} if there is a basic system of neighbourhoods of zero consisting of convex sets.

For example, every normed space is locally convex. A finite-dimensional vector space supports a unique topology making it a topological vector space, which is therefore locally convex.

The Hahn--Banach extension theorem is one of the most basic results of the theory, and we will use a rather limited version of it.

\begin{theorem}[Hahn--Banach theorem; Th. 3.2 in \cite{schaefer}; Sect. 7.2 in \cite{jarchow}; Th. 18.1 in \cite{treves2}]
  Let $F$ be a topological vector subspace of a locally convex space $E$. Every continuous linear functional $f\colon F\to\R$ admits an extension to a continuous linear functional $\tilde f\colon E\to\R$.
  \label{th:hb}
\end{theorem}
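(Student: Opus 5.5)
The plan is to reduce the statement to the classical algebraic (dominated-extension) form of the Hahn--Banach theorem, using local convexity only to produce a continuous seminorm that dominates $f$. The proof has three steps: build a dominating seminorm, extend algebraically under that seminorm, and read off continuity from the domination.

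\emph{Step 1: a dominating seminorm.} Since $f\colon F\to\R$ is continuous at $0$ and $F$ carries the subspace topology, there is a neighbourhood $W$ of zero in $E$ with $\abs{f(y)}\leq 1$ for all $y\in W\cap F$. Because $E$ is locally convex, we may shrink $W$ to a convex neighbourhood $U$. Replacing $U$ by $U\cap(-U)$, and if necessary by its balanced convex core, we may assume $U$ is convex and balanced. Let $p=p_U$ be the Minkowski functional of $U$, defined by $p(x)=\inf\{t>0: x\in tU\}$. Since $U$ is absorbing, $p$ is finite; since $U$ is convex and balanced, $p$ is a seminorm. Since $p\leq 1$ on the neighbourhood $U$, $p$ is continuous. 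By homogeneity, $\abs{f(y)}\leq p(y)$ for all $y\in F$: if $y\in tU$ then $y/t\in U\cap F$.

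\emph{Step 2: algebraic extension.} We extend $f$ to a linear functional $\tilde f$ on all of $E$ with $\tilde f\leq p$. The key one-step lemma concerns a subspace $G\supseteq F$, a linear $g\leq p$ on $G$, and a vector $x_0\notin G$. For all $y,z\in G$,
\[
g(y)+g(z)\leq p(y+z)\leq p(y-x_0)+p(z+x_0),
\]
so $\sup_{y\in G}\bigl(g(y)-p(y-x_0)\bigr)\leq\inf_{z\in G}\bigl(p(z+x_0)-g(z)\bigr)$. Choosing $c$ between these two quantities and setting $g(x_0)=c$ extends $g$ to $G+\R x_0$ with $g\leq p$; one checks this separately for positive and negative multiples of $x_0$. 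Then apply Zorn's lemma to the set of pairs $(G,g)$ with $F\subseteq G$, $g$ extending $f$ and $g\leq p$, ordered by extension. Chains have upper bounds given by unions, so a maximal element exists. By the one-step lemma, its domain must be all of $E$.

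\emph{Step 3: continuity.} From $\tilde f\leq p$ and the symmetry of $p$ we get $\abs{\tilde f(x)}\leq p(x)$. Hence $\tilde f$ is bounded by $1$ on the neighbourhood $U$, and a linear functional bounded on a neighbourhood of zero is continuous.

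The only genuinely nontrivial point is the one-step extension inequality in Step 2, together with the (routine) Zorn's lemma bookkeeping. Step 1 is where local convexity enters essentially: without a convex neighbourhood, no dominating continuous seminorm need exist.
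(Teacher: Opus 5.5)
Your proof is correct. It is the standard textbook argument: you dominate $f$ by a continuous seminorm, namely the Minkowski functional of a convex symmetric neighbourhood of zero, and then apply the Zorn's-lemma dominated extension. The paper gives no proof of its own and simply cites Schaefer, Jarchow and Tr\`eves for this result. The ``balanced convex core'' step is unnecessary, since over $\R$ a convex symmetric set is already balanced.
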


\begin{corollary}
  Let $F$ be a topological vector subspace of a locally convex space $E$, and let $T\colon F\to \R^m$ be a continuous linear operator. Then $T$ extends to a continuous linear operator from $E$ to $\R^m$.
  \label{c:hb}
\end{corollary}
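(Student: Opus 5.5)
The plan is to reduce Corollary \ref{c:hb} to Theorem \ref{th:hb} one coordinate at a time. Write $T=(T_1,\dots,T_m)$, where $T_i=p_i\circ T\colon F\to\R$ and $p_i\colon\R^m\to\R$ is the $i$-th coordinate projection. Each $p_i$ is linear, and it is continuous because $\R^m$ carries its unique vector space topology, which is the product topology. Hence each $T_i$ is a continuous linear functional on the topological vector subspace $F$ of $E$.

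Next we apply Theorem \ref{th:hb} to each $T_i$ separately. This gives continuous linear functionals $\tilde T_i\colon E\to\R$ with $\tilde T_i|_F=T_i$ for $i=1,\dots,m$. Then we set $\tilde T(x)=(\tilde T_1(x),\dots,\tilde T_m(x))$ for $x\in E$.

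Finally we check that $\tilde T$ has the required properties.
\begin{enumerate}
\item $\tilde T$ is linear, since each of its coordinates is linear.
\item $\tilde T$ is continuous: a map into the product $\R^m$ is continuous exactly when all its coordinates are.
\item $\tilde T$ extends $T$: for $x\in F$ we have $\tilde T(x)=(T_1(x),\dots,T_m(x))=T(x)$.
\end{enumerate}

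There is no real obstacle. The only point needing a remark is that continuity into $\R^m$ may be tested coordinatewise, and this holds because every Hausdorff vector topology on $\R^m$ coincides with the Euclidean (product) topology.
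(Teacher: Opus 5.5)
Your proof is correct and is exactly the paper's argument: extend each of the $m$ coordinate functionals of $T$ separately by the Hahn--Banach theorem and assemble the extensions into a map into $\R^m$. You also write out the routine checks of linearity, coordinatewise continuity, and agreement with $T$ on $F$, which the paper leaves implicit.
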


\begin{proof}
  Enough to extend each one of the $m$ coordinate components of $T$ separately by the Hahn--Banach theorem, and put them together.
  \end{proof}

A {\em prenorm} (or: {\em semi-norm}) on a vector space $E$ is a map $p\colon E\to \R$ satisfying the axioms of the norm except that $p(x)=0$ does not necessarily imply $x=0$. Thus, we have:
\begin{itemize}
\item $p(\lambda x)=\abs\lambda p(x)$,
  \item $p(x+y)\leq p(x)+p(y)$.
\end{itemize}
(The non-negativity of $p(x)$ and the fact that $p(0)=0$ follow.)

The open and closed balls with regard to a prenorm $p$ are defined just like in the case of a norm:
\[B^p_\e(x)=\{y\in E\colon p(x-y)<\e\},~~\bar B^p_\e(x)=\{y\in E\colon p(x-y)\leq\e\}.\]
We will add the prenorm symbol in such cases.

A basic result of the theory of locally convex spaces states that for every neighbourhood of zero, $W$, there is a continuous prenorm $p$ on $E$ with $B^p_1(0)\subseteq W$ (\cite{jarchow}, Sect. 7.5; \cite{schaefer}, p. 39; \cite{treves2}, Prop. 7.5).

If $\pi\colon E\to F$ is a linear operator and $F$ is a normed space, we can define a prenorm on $E$ by $p(x)=\norm{\pi(x)}_F$. If $E$ is a locally convex space and $\pi$ is continuous, then so is the prenorm $p$. Conversely,
given a prenorm, $p$, on a vector space $E$, the set $N_p=\{x\in E\colon p(x)=0\}$ forms a linear subspace, and the quotient space $E/N_p$ (often denoted simply $E/p$) is equipped with the norm, consistently defined by $\norm{x+N_p}=p(x)$.

A family $P$ of prenorms on a vector space $E$ {\em separates points} if for each $x\neq 0$ there is $p\in P$ with $p(x)\neq 0$. Every such family $P$ determines a Hausdorff locally convex topology: the weakest topology in which all prenorms from the family are continuous, or, equivalently, the weakest topology in which every ball $B^p_\e(x)$, $\e>0$, $x\in E$, $p\in P$ is open. (See Prop. 7.6 in \cite{treves2}.)

Examples of locally convex spaces that are not normable are given by direct products. If $(E_\alpha)$ is a family of normed spaces, the direct product $\prod_{\alpha}E_{\alpha}$ is a vector space, and we equip it with the product topology: the topology generated by the prenorms of the form $\norm{\pi_\alpha(x)}_{E_\alpha}$ for all $\alpha$. If the family is infinite and each $E_\alpha$ is nontrivial, the product space is (easily seen to be) non-normable.

Again, let $P$ be a family of continuous prenorms on a locally convex space $E$. There is a canonical continuous linear operator from $E$ to the product $\prod_{p\in P}E/p$, given by $\varpi(x)_{\alpha} = \pi_{\alpha}(x)$, where $\pi_{\alpha}\colon E\to E/p$ denotes the quotient mapping. If the family $P$ generates the topology of $E$, then the operator $\varpi$ is a topological embedding.

If the topology of a locally convex space $E$ is generated by a family of prenorms $P$ with the property that each $E/p$ is a pre-Hilbert space, then $E$ is called a {\em multi-Hilbert space.}

A locally convex space $E$ is {\em complete} if it has the following property. Let $\mathscr F$ be a non-empty family of non-empty subsets of $E$ closed under finite intersections and such that for every neighbourhood $V$ of zero there is $F\in\mathscr F$ with $x-y\in V$ for each $x,y\in F$. Then there is a (necessarily unique) element $x\in E$ whose every neighbourhood contains as a subset an element of $\mathscr F$. See Ch. 3 in \cite{jarchow} or Sect. 1.4 in \cite{schaefer}.
For example, a normed space is complete in this sense if and only if it is a Banach space. All the locally convex spaces that would play any role in the theory of differential equations are invariably complete.
The product of a family of complete locally convex spaces is complete (Prop. 3.2.6 in \cite{jarchow}).

A complete locally convex space $E$ has the {\em approximation property} if the identity map $\Id_E\colon E\to E$ is approximated, uniformly on compact sets, with linear operators $T\colon E\to E$ of finite rank. That is, given a compact $K\subseteq E$ and an open $U$ containing zero, there is a continuous linear operator $T\colon E\to E$ with $T(E)$ finite dimensional and such that for all $x\in K$, $x-Tx\in U$.

There are slight variations of this concept, but they all coincide between themselves and with the above in the most important case where $E$ is {\em complete}. See Remark on p. 399 in \cite{jarchow}.

\begin{proposition}[Corol. 18.2.4 in \cite{jarchow}]
  The product of a family of locally convex spaces with approximation property has the approximation property.
\end{proposition}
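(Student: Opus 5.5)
The plan is to reduce everything to finitely many coordinates, using the fact that the product topology controls only finitely many coordinates at a time. Let $E=\prod_{\alpha}E_\alpha$, where each $E_\alpha$ has the approximation property. Write $\pi_\alpha\colon E\to E_\alpha$ for the coordinate projections and $\iota_\alpha\colon E_\alpha\to E$ for the canonical injections, which put $y$ in the $\alpha$-th coordinate and $0$ elsewhere. Both are continuous linear operators. For $\pi_\alpha$ this is the definition of the product topology. For $\iota_\alpha$ it holds because each coordinate of $\iota_\alpha$ is either the identity or zero. If completeness is part of the definition being used, the product is complete by Prop. 3.2.6 in \cite{jarchow}, as recalled above.

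Fix a compact $K\subseteq E$ and an open set $U\ni 0$. By definition of the product topology, $U$ contains a basic neighbourhood of zero of the form
\[W=\{x\in E\colon \pi_\alpha(x)\in U_\alpha \text{ for all }\alpha\in A\},\]
where $A$ is a finite set of indices and each $U_\alpha$ is a neighbourhood of zero in $E_\alpha$. For each $\alpha\in A$, the image $K_\alpha=\pi_\alpha(K)$ is compact in $E_\alpha$, being a continuous image of a compact set. The approximation property of $E_\alpha$ then gives a continuous finite-rank operator $T_\alpha\colon E_\alpha\to E_\alpha$ with $y-T_\alpha y\in U_\alpha$ for every $y\in K_\alpha$.

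Now set
\[T=\sum_{\alpha\in A}\iota_\alpha\circ T_\alpha\circ\pi_\alpha .\]
This is a finite sum of continuous linear maps, so $T$ is continuous and linear. Its range lies in $\sum_{\alpha\in A}\iota_\alpha(T_\alpha(E_\alpha))$, which is finite-dimensional. For $x\in K$ and $\alpha\in A$, the $\alpha$-th coordinate of $x-Tx$ equals $\pi_\alpha(x)-T_\alpha\pi_\alpha(x)$, which lies in $U_\alpha$ because $\pi_\alpha(x)\in K_\alpha$. Hence $x-Tx\in W\subseteq U$ for all $x\in K$, which is the approximation property for $E$.

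There is no real obstacle here. The one point needing care is the reduction to a basic neighbourhood that constrains only the finitely many coordinates in $A$: coordinates outside $A$ are unconstrained, so $T$ may simply vanish there. This finiteness is what keeps the rank of $T$ finite even when the family is infinite.
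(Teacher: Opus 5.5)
Your proof is correct. The basic neighbourhood $W$ constrains only the finitely many coordinates in $A$. The relations $\pi_\alpha\circ\iota_\alpha=\mathrm{id}_{E_\alpha}$ and $\pi_\alpha\circ\iota_\beta=0$ for $\beta\neq\alpha$ give $\pi_\alpha(x-Tx)=\pi_\alpha(x)-T_\alpha\pi_\alpha(x)\in U_\alpha$ for $\alpha\in A$. Finally, $T$ is a continuous operator of finite rank.

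The paper gives no argument for this proposition; it only cites Jarchow. What it actually uses later is the special case Corollary \ref{c:multihilbertAP}, again by citation (to Schaefer). So your argument does not compete with a proof in the paper; it supplies a self-contained, elementary proof where the paper relies on the literature. The citation buys brevity and places the statement inside the reference's general treatment of the approximation property. Your argument shows directly why an infinite family causes no difficulty: only finitely many coordinates are ever tested, and $T$ may vanish on the others.

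Two further points in your favour. First, the proof does not depend on which variant of the definition is adopted, since $\pi_\alpha$ maps precompact sets to precompact sets just as it maps compact sets to compact sets. Second, combined with Theorem \ref{th:approx} (orthogonal projections of $\ell^2$ onto the span of the first $n$ basis vectors), it gives a fully self-contained proof that $\ell_2^{\N}$ has the approximation property. That is the only instance the paper needs in Section \ref{s:proofs}, with $T$ acting as such a projection on finitely many coordinates and as zero on the rest.
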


We will only need a very particular case:

\begin{corollary}[See Corol. 9.2.1 in \cite{schaefer}]
  The product of a family of Hilbert spaces with the product topology has the approximation property.
  \label{c:multihilbertAP}
  \end{corollary}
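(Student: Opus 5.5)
The plan is to prove the corollary directly from the definition of the approximation property, using only Theorem \ref{th:approx} on each factor. Two reductions come first. A basic neighbourhood of zero in the product sees only finitely many coordinates, so only finitely many factors need to be approximated. Each of those factors only has to be handled on a compact set, and Theorem \ref{th:approx} does that.

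Let $E=\prod_{\alpha}\H_\alpha$ with the product topology, let $K\subseteq E$ be compact, and let $U$ be an open set containing zero. By the definition of the product topology (generated by the prenorms $x\mapsto\norm{\pi_\alpha(x)}_{\H_\alpha}$), there are pairwise distinct indices $\alpha_1,\dots,\alpha_n$ and an $\e>0$ such that
\[W=\{x\in E\colon \norm{\pi_{\alpha_i}(x)}<\e,\ i=1,\dots,n\}\subseteq U.\]
It therefore suffices to find a continuous linear operator $T\colon E\to E$ of finite rank with $x-Tx\in W$ for all $x\in K$.

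Fix $i$ and put $K_i=\pi_{\alpha_i}(K)$. It is compact in $\H_{\alpha_i}$, being a continuous image of $K$, and it is metrizable, so it is separable by Proposition \ref{p:sep}. Let $\H_i$ be the closed linear span of a countable dense subset of $K_i$. This is a separable Hilbert space containing $K_i$. Choose an increasing chain of finite-dimensional subspaces of $\H_i$ with dense union, for instance the spans of the first $k$ elements of the countable dense set. Theorem \ref{th:approx} then gives a finite-dimensional subspace $L_i\subseteq\H_i$ whose orthogonal projection, computed inside $\H_i$, moves every point of $K_i$ by less than $\e$.

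The orthogonal projection $P_i$ of the whole space $\H_{\alpha_i}$ onto $L_i$ agrees on $\H_i$ with the projection computed inside $\H_i$, since both send $y$ to the nearest point of $L_i$. Hence
\[\norm{y-P_iy}<\e\quad\text{for all } y\in K_i .\]
This is the only point needing a little care, namely passing from the separable subspace to the possibly non-separable factor. It is harmless precisely because nearest-point projections onto $L_i$ do not depend on the ambient space.

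Finally, define $T\colon E\to E$ coordinatewise by
\[\pi_{\alpha_i}(Tx)=P_i(\pi_{\alpha_i}x)\quad (i=1,\dots,n),\qquad \pi_\beta(Tx)=0\quad\text{for all other }\beta.\]
Each coordinate of $T$ is a continuous linear map, so $T$ is continuous and linear into the product. Its range is contained in $L_1\times\dots\times L_n$, viewed as a subspace of $E$, so $T$ has finite rank. For $x\in K$ we have $\norm{\pi_{\alpha_i}(x-Tx)}=\norm{\pi_{\alpha_i}x-P_i\pi_{\alpha_i}x}<\e$ for every $i$, so $x-Tx\in W\subseteq U$. This is exactly the approximation property.
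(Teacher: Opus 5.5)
Your proof is correct, but it does not follow the paper's route. The paper gives no argument of its own. It obtains the statement as a special case of the permanence result quoted just before it (a product of locally convex spaces with the approximation property has it, Corol. 18.2.4 in Jarchow), together with the classical fact that Hilbert spaces have the approximation property.

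You instead argue from scratch, using only Proposition \ref{p:sep} and Theorem \ref{th:approx}. A basic neighbourhood of zero constrains only finitely many coordinates. Each of these is handled by a finite-rank orthogonal projection that works on the projected compact set, and these are assembled into a finite-rank operator vanishing on the remaining coordinates. This is the standard proof of the permanence theorem specialised to Hilbert factors. If you replace $P_i$ by any finite-rank approximant of the identity, it works verbatim for arbitrary factors with the approximation property.

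So the citation buys brevity and generality. Your version buys self-containedness, including for non-separable factors, which Theorem \ref{th:approx} as stated does not cover.

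Two small points:
\begin{enumerate}
\item You can skip the passage through the separable subspace $\H_i$. If $A$ is a finite $\e/2$-net of $K_i$ and $L_i=\mathrm{span}\,A$, the orthogonal projection of $\H_{\alpha_i}$ onto $L_i$ already moves each point of $K_i$ by less than $\e/2$.
\item The paper defines the approximation property only for complete spaces. Add a line noting that the product is complete, being a product of complete spaces.
\end{enumerate}
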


\subsection{Uniform continuity}

We will be using in an essential way the uniform continuity of functions defined on compact spaces, on locally convex spaces, and on metric spaces. The unifying viewpoint is provided by the notion of a {\em uniform space}. A ({\em separated}) {\em uniform space} is a set, $X$, equipped with a {\em uniform structure,} which is a collection, $\mathcal U$, of subsets of the square $X\times X$ all containing the diagonal $\Delta=\{(x,x)\colon x\in X\}$ and satisfying the properties:
\begin{itemize}
\item If $U\in \mathcal U$ and $U\subseteq V\subseteq X\times X$, then $V\in\mathcal U$.
\item If $U\in {\mathcal U}$, then $U^{-1}=\{(y,x)\colon (x,y)\in U\}\in\mathcal U$.
\item If $U\in {\mathcal U}$, there is $V\in\mathcal U$ such that
  \[V^2 = \{(x,z)\colon \exists y\in X,~(x,y)\in V,~(y,z)\in V\}\subseteq U.\]
\item If $x,y\in X$ and $x\neq y$, then for some $U\in\mathscr U$, $(x,y)\notin U$.
\end{itemize}

The sets $V\in \mathcal U$ are called {\em entourages of the diagonal}. For two such entourages, $V$ and $U$, their composition is defined as
\[V\circ U=\{(x,z)\colon \exists y,~~(x,y)\in V,~(y,z)\in U\}.\]
A particular case is the square of an entourage, $V^2=V\circ V$, that appeared in one of the axioms. 

Three important sources of examples of uniform spaces are the following.

(1) If $(X,d)$ is a metric space, then the uniform structure on $X$ generated by the metric $d$ consists of all sets $V\subseteq X\times X$ for which there is $\e>0$ with
\[\{(x,y)\in X\times X\colon d(x,y)<\e\}\subseteq V.\]
This example helps to see that the axioms of a uniform space mirror the axioms of a metric.

(2) If $E$ is a locally convex space, then the additive uniform structure on $E$ consists of all $U\subseteq E\times E$ such that for some neighbourhood of zero, $W$, in $E$ we have $(x,y)\in U$ as soon as $x-y\in W$.

If $E$ is a normed space, this is the same uniform structure as generated by the metric associated with the norm: $d(x,y)=\norm{x-y}_E$.

(3) And if $K$ is a (not necessarily metrizable) compact space, the canonical uniform structure on $K$ is given by all neighbourhoods of the diagonal, that is, all sets $V\subseteq K\times K$ whose interior in the product topology contains the diagonal, $\Delta$. 

Every uniform space supports a canonical topology: the basic neighbourhoods of a point $x\in X$ are formed by the sets $U[x]=\{y\in X\colon (x,y)\in U\}$, where $U\in\mathcal U$. For a metric space with the metric uniform structure this is the same topology as generated by the metric. For a locally convex space, the additive uniform structure generates the original topology. For a compact space, the unique uniformity generates the topology of the space.

A function $f\colon (X,{\mathcal U}_X)\to (Y,{\mathcal U}_Y)$ between two uniform spaces is {\em uniformly continuous} if for every $U\in {\mathcal U}_Y$ there is $V\in {\mathcal U}_X$ with $(f(x),f(y))\in U$ for all $(x,y)\in V$. It is easy to see that in the case of metric spaces, this definition is equivalent to the classical one:
\[\forall \e>0,~\exists\delta>0~,\forall x,y\in X,~~d_Y(f(x),f(y))<\e.\]

In case of a function $f$ between two locally convex spaces, $E$ and $F$, the definition becomes: for every neighbourhood, $V$, of zero in $F$, there is a neighbourhood of zero, $W$, in $E$, such that, if $x-y\in W$, then $f(x)-f(y)\in V$.

And it is easily verified, directly from the definitions, that every continuous mapping $f$ from a compact space to a uniform space is automatically uniformly continuous: it is because the mapping $f\times f$, sending $(x,y)$ to $(f(x),f(y))$, is continuous. (In this abstract setting the proof becomes instantanous, more transparent than in the textbook case of a uniformly continuous real-valued function on a metrizable compact space.)

As a consequence of this fact, a Hausdorff compact space supports a unique uniform structure generating the topology. For this reason, if $K$ is a metrizable compact space, any compatible metric on $K$ will generate the canonical uniform structure, and the notions of uniform continuity for functions defined on $K$ are all equivalent, whether we are using a metric generating the topology of $K$, or the additive uniform structure of a locally convex space containing $K$ as a topological subspace.

For all of this, we refer the reader to Chapter 8 in \cite{engelking}, Sections 8.1--8.3, in particular Th. 8.3.13.

\begin{lemma}
  Let $f\colon X\to Y$ be a mapping between two uniform spaces that is continuous with regard to the corresponding topologies. Let $K\subseteq X$ be a compact subset. For every entourage $V\in {\mathcal U}_Y$ there is an entourage $U\in {\mathcal U}_X$ with the property: if $x\in K$, $y\in X$, and $(x,y)\in U$, then $(f(x),f(y))\in V$. 
\end{lemma}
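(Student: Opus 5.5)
The plan is the standard compactness argument: cover $K$ by finitely many points and use the triangle-type axiom of uniform structures to pass from those finitely many points to all of $K$.

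Fix $V\in\mathcal U_Y$. By the composition axiom, choose a symmetric entourage $W\in\mathcal U_Y$ with $W^2\subseteq V$; we may replace $W$ by $W\cap W^{-1}$, which is still an entourage by the first two axioms. Continuity of $f$ at a point $x\in K$ means that $f^{-1}(W[f(x)])$ is a neighbourhood of $x$ in the uniform topology of $X$. Hence there is an entourage $U_x\in\mathcal U_X$ with $f(U_x[x])\subseteq W[f(x)]$. Choose a further entourage $U'_x$ with $(U'_x)^2\subseteq U_x$, again taken symmetric.

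The sets $\operatorname{int}U'_x[x]$, $x\in K$, are open neighbourhoods of the points $x$ and cover $K$. Here one uses the standard fact that $U[x]$ is a neighbourhood of $x$, so its interior contains $x$. By compactness, pick $x_1,\dots,x_n\in K$ such that the sets $U'_{x_i}[x_i]$ cover $K$, and put
\[U=\bigcap_{i=1}^n U'_{x_i},\]
which is an entourage because $\mathcal U_X$ is closed under finite intersections. This closure is the standard filter property of a uniformity; it is implicit in the paper's definition and I would remark on it explicitly.

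Now let $x\in K$ and $y\in X$ with $(x,y)\in U$. Choose $i$ with $x\in U'_{x_i}[x_i]$, that is, $(x_i,x)\in U'_{x_i}$. Since also $(x,y)\in U'_{x_i}$, we get $(x_i,y)\in (U'_{x_i})^2\subseteq U_{x_i}$. Therefore both $(f(x_i),f(x))$ and $(f(x_i),f(y))$ lie in $W$. By symmetry $(f(x),f(x_i))\in W$, and so
\[(f(x),f(y))\in W^2\subseteq V.\]

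The only delicate points are bookkeeping rather than real obstacles. One must make sure the entourages can be taken symmetric and closed under finite intersections, and one must get the order of the pairs right in the composition. If one prefers not to rely on the interior of $U[x]$, one can instead cover $K$ by the neighbourhoods $U'_x[x]$ directly: compactness only requires that every point of $K$ lie in the interior of some member of the cover, and each $x$ lies in the interior of its own $U'_x[x]$.
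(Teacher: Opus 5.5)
Your proof is correct and follows essentially the same route as the paper's: use continuity at each point of $K$, extract a finite subcover by sets $U'_{x_i}[x_i]$, intersect the corresponding entourages, and conclude by composing. Your preliminary choice of a symmetric $W$ with $W^2\subseteq V$ is a genuinely needed refinement, since the paper's argument as written only delivers $(f(x_i),f(x)),(f(x_i),f(y))\in V$ and hence $(f(x),f(y))\in V^{-1}\circ V$; one small correction is that $W\cap W^{-1}$ being an entourage uses the finite-intersection (filter) property, not the first two listed axioms.
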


\begin{proof}
  For every $x\in K$, use the continuity of $f$ at $x$ to select a symmetric entourage $U_x\in {\mathcal U}_X$ so that for each $y\in X$ satisfying $(x,y)\in U_x^2$, we have $(f(x),f(y))\in V$. Using the compactness of $K$, select a finite set $(x_i)_{i=1}^n$ with open sets $U_{x_i}[x_i]$ covering $K$. Set $U=\cap_{i=1}^n U_{x_i}$. If now $x\in K$ and $y\in X$ satisfy $(x,y)\in U$, then for some $i=1,2,\ldots,n$ we have $(x_i,x)\in U_{x_i}$, therefore $(x_i,y)\in U_{x_i}\circ U\subseteq U_{x_i}^2$, and the statement follows by the choice of $U_{x_i}$.
\end{proof}

In particular, we have as the corollary:

\begin{lemma}
  Let $f\colon E\to F$ be a continuous mapping between two locally convex spaces, let $K$ be a compact subset of $E$, and let $V$ be a neighbourhood of zero in $F$. There is a neighbourhood of zero $U$ in $E$ such that if $x\in K$, $y\in E$, and $x-y\in U$, then $f(x)-f(y)\in V$.
  \label{l:uniformnbhd}
  \end{lemma}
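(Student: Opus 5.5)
The statement to prove is Lemma~\ref{l:uniformnbhd}, which is a direct specialization of the preceding lemma on uniform spaces. The plan is to equip $E$ and $F$ with their additive uniform structures, example (2) above, and then translate entourages into neighbourhoods of zero in both directions.

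Let $V$ be a neighbourhood of zero in $F$. First we form the set
\[\tilde V=\{(u,v)\in F\times F\colon u-v\in V\}.\]
This is an entourage of the additive uniform structure on $F$: it contains the diagonal and is of the required form with $W=V$.

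Since $f$ is continuous for the original topologies, and the additive uniformities generate those topologies, the preceding lemma applies to $f$, $K$ and the entourage $\tilde V$. It gives an entourage $\tilde U$ of $E$ with the following property: if $x\in K$, $y\in E$ and $(x,y)\in\tilde U$, then $(f(x),f(y))\in\tilde V$, which means $f(x)-f(y)\in V$.

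By the definition of the additive uniform structure on $E$, there is a neighbourhood of zero $U$ in $E$ such that $x-y\in U$ implies $(x,y)\in\tilde U$.

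Now take $x\in K$ and $y\in E$ with $x-y\in U$. Then $(x,y)\in\tilde U$, so $(f(x),f(y))\in\tilde V$, that is, $f(x)-f(y)\in V$, as required.

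There is no real obstacle here. The only point needing care is the sign convention in the differences: we need $x-y$ and $u-v$ rather than $y-x$. This is harmless, since the lemma's conclusion is stated in terms of $x-y$, and in any case one may replace $U$ by the symmetric neighbourhood $U\cap(-U)$.
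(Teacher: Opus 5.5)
Your proof is correct and follows the paper's route: the paper presents this lemma as an immediate corollary of the preceding uniform-space lemma, applied to the additive uniform structures on $E$ and $F$. Your translation of $V$ into the entourage $\{(u,v)\colon u-v\in V\}$, and of the resulting entourage $\tilde U$ back into a neighbourhood $U$ of zero, is exactly the verification the paper leaves implicit. The closing remark about signs is unnecessary, since you use the $x-y$ convention consistently throughout.
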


We will also need the Kat\v etov theorem.

\begin{theorem}[Kat\v etov Extension Theorem, \cite{katetov}, Th. 3]
  Every bounded uniformly continuous real-valued function defined on a subspace of a uniform space extends to a bounded uniformly continuous function on the entire space.
  \label{th:katetov}
\end{theorem}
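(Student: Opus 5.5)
We prove Katětov's theorem in the standard way: reduce to uniformly continuous maps into $[0,1]$, then build the extension as a uniform limit of extensions of finitely many "level" pieces, each extended by a distance-type formula relative to a uniformly continuous pseudometric.

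\emph{Step 1: a pseudometric.} Let $Y\subseteq X$ and let $f\colon Y\to\R$ be bounded and uniformly continuous. Composing with an affine map, we may assume $f(Y)\subseteq[0,1]$. By the standard metrization lemma for uniform spaces (Engelking, Ch. 8), every entourage contains a set of the form $\{(x,y)\colon d(x,y)<1\}$ for some uniformly continuous pseudometric $d$ on $X$. Using countably many entourages witnessing uniform continuity of $f$ at scales $2^{-n}$, we combine the corresponding pseudometrics into one, $d=\sum_n 2^{-n}\min(d_n,1)$. This gives a single uniformly continuous pseudometric $d$ on $X$ such that $f$ is uniformly continuous with respect to $d|_Y$: for every $\e>0$ there is $\delta>0$ with $\abs{f(x)-f(y)}<\e$ whenever $x,y\in Y$ and $d(x,y)<\delta$.

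\emph{Step 2: reduction to a pseudometric space.} Any function on $X$ that is uniformly continuous for $d$ is uniformly continuous for the uniform structure of $X$. So it suffices to extend $f$ from $Y$ to the pseudometric space $(X,d)$, keeping uniform continuity and the bounds. We may pass to the metric quotient of $(X,d)$, since $f$ is constant on $d$-zero classes meeting $Y$, and to closures by uniform continuity. For a bounded real-valued function on a metric space, the general uniformly continuous case is harder than the Lipschitz case, where the McShane formula $\inf_{y\in Y}\bigl(f(y)+L\,d(x,y)\bigr)$ works directly.

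\emph{Step 3: the main step, Katětov's construction.} I plan to write $f$ as a uniform limit of Lipschitz functions via a telescoping sum $f=\sum_k g_k$, with $g_k$ Lipschitz on $Y$ with constant $L_k$ and $\norm{g_k}_\infty\le 2^{-k}$. One way to obtain such a sequence is to take $f_k(x)=\inf_{y\in Y}\bigl(f(y)+k\,d(x,y)\bigr)$, which increases uniformly to $f$ on $Y$ by uniform continuity, and set $g_k=f_{n_k}-f_{n_{k-1}}$ along a suitable subsequence. Each $g_k$ extends by McShane to a $L_k$-Lipschitz function $\tilde g_k$ on $X$, truncated to $[-2^{-k},2^{-k}]$. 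Then $F=\sum_k\tilde g_k$ converges uniformly, is bounded, and extends $f$. Being a uniform limit of uniformly continuous functions, it is uniformly continuous. Finally, truncate $F$ to $[0,1]$.

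The delicate point is guaranteeing the sup-norm bounds $\norm{g_k}\le 2^{-k}$ on $Y$ so that the series converges uniformly. This follows from uniform convergence $f_n\to f$ on $Y$, which in turn follows from the modulus of continuity in Step 1. I expect this bound to be the only real obstacle; the rest is routine.
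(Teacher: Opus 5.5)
The paper does not prove this theorem. It is quoted from Kat\v etov's paper and used only through Corollary~\ref{c:katetov}, so there is no internal argument to compare yours with. Judged as a self-contained proof, your proposal is correct.

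\textbf{Step 1.} This works as written. If $d(x,y)<2^{-n}$, then $\min(d_n(x,y),1)<1$, so $(x,y)\in U_n$ and $\abs{f(x)-f(y)}<2^{-n}$. Also, $\{d<\e\}$ contains a finite intersection of the entourages $\{d_n<\e/2\}$, so $d$ is uniformly continuous on $X$.

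\textbf{The obstacle you flag in Step 3.} This does resolve as you expect. Let $x\in Y$ and $k\geq 1/\delta$. Points $y$ with $d(x,y)\geq\delta$ contribute at least $k\delta\geq 1\geq f(x)$. Points with $d(x,y)<\delta$ contribute more than $f(x)-\e$. Hence $f(x)-\e\leq f_k(x)\leq f(x)$ on $Y$, and the convergence is uniform. This is exactly where boundedness enters, and it cannot be dropped in a general uniform space: $n\mapsto n^2$ on $\mathbb{Z}\subseteq\R$ is uniformly continuous on the uniformly discrete subspace but has no uniformly continuous extension.

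\textbf{Small tidy-ups.}
\begin{itemize}
\item Start the telescoping series with $g_0=f_{n_0}$. Then choose $n_k$ with $\sup_Y\abs{f-f_{n_k}}\leq 2^{-k-2}$, so that $\sup_Y\abs{g_k}<2^{-k}$ for $k\geq 1$.
\item The second McShane extension is superfluous. Each $f_n$ is already defined and $n$-Lipschitz on all of $X$, so you can take $\tilde g_k$ to be the truncation of $f_{n_k}-f_{n_{k-1}}$ to $[-2^{-k},2^{-k}]$ on $X$ directly.
\item Passing to the metric quotient and to closures is unnecessary, since everything works for pseudometrics.
\end{itemize}

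\textbf{An alternative one-shot route.} After the same Step~1 reduction, set $\omega(t)=\inf_{\e>0}\bigl(\e+t/\delta(\e)\bigr)$, where $\delta(\e)$ witnesses uniform continuity of $f\colon Y\to[0,1]$. This $\omega$ is concave, nondecreasing, subadditive, satisfies $\omega(0^+)=0$, and dominates the oscillation of $f$. Then $F(x)=\inf_{y\in Y}\bigl(f(y)+\omega(d(x,y))\bigr)$, truncated to $[0,1]$, is an extension with modulus $\omega$. That route gives an explicit modulus for the extension. Your series avoids the concave-majorant construction at the cost of a limiting argument. In both, the abstract uniform structure enters only through the single pseudometric $d$.
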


If a function $f$ on a uniform space is taking values in a finite-dimensional vector space $F$ with its unique topology (hence, the uniform structure), then, by identifying $F$ with $\R^{\dim F}$ and applying Katetov Extension Theorem separately to each coordinate component, we obtain:

\begin{corollary}
  Every bounded uniformly continuous function with values in a finite-dimensional vector space defined on a subspace of a uniform space extends to a bounded uniformly continuous function on the entire space.
  \label{c:katetov}
\end{corollary}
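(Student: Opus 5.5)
The plan is to reduce the statement to the scalar case, Theorem~\ref{th:katetov}, by working coordinatewise. Let $X$ be a uniform space, $A\subseteq X$ a subspace with the induced uniform structure, $F$ a finite-dimensional vector space, and $\phi\colon A\to F$ bounded and uniformly continuous. Fix a basis of $F$, which gives a linear isomorphism $\Phi\colon F\to\R^m$, $m=\dim F$. Since $F$ carries its unique topological vector space topology, $\Phi$ is a topological isomorphism. Being linear, $\Phi$ and $\Phi^{-1}$ are also uniformly continuous for the additive uniform structures: a neighbourhood $W$ of zero pulls back to the neighbourhood $\Phi^{-1}(W)$ of zero. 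They also preserve boundedness, since the bounded sets are the same under an isomorphism of finite-dimensional spaces. So it suffices to treat $F=\R^m$, which I equip with the max-norm $\norm{y}_\infty=\max_i\abs{y_i}$; this generates the unique topology and the additive uniform structure.

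Write $\phi=(\phi_1,\ldots,\phi_m)$ with $\phi_i=\pi_i\circ\phi$, where $\pi_i$ is the $i$-th coordinate projection. Each $\pi_i$ is linear with $\abs{\pi_i(y)}\leq\norm{y}_\infty$, so each $\phi_i\colon A\to\R$ is bounded and uniformly continuous. By Theorem~\ref{th:katetov}, each $\phi_i$ extends to a bounded uniformly continuous $\tilde\phi_i\colon X\to\R$. Set $\tilde\phi=(\tilde\phi_1,\ldots,\tilde\phi_m)\colon X\to\R^m$. It extends $\phi$, and it is bounded because $\norm{\tilde\phi(x)}_\infty\leq\max_i\sup\abs{\tilde\phi_i}$.

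The only point needing care is the uniform continuity of $\tilde\phi$. Given $\e>0$, choose for each $i$ an entourage $U_i$ of $X$ such that $(x,y)\in U_i$ implies $\abs{\tilde\phi_i(x)-\tilde\phi_i(y)}<\e$. The finite intersection $U=\bigcap_{i=1}^m U_i$ is again an entourage; this follows from the axioms, as is standard, and I would note it explicitly. For $(x,y)\in U$ we then get $\norm{\tilde\phi(x)-\tilde\phi(y)}_\infty<\e$. Finally, compose with $\Phi^{-1}$ to return to $F$. The finiteness of $m$ is exactly what makes this intersection step work, and it is the only mild obstacle; everything else is bookkeeping.
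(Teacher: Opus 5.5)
Your proof is correct and is the same as the paper's one-line argument: identify $F$ with $\R^{\dim F}$, apply the Kat\v etov theorem to each coordinate, and reassemble. One small point: closure of entourages under finite intersections is not derivable from the four conditions the paper lists. It is a separate, standard axiom of a uniformity (a uniformity is a filter on $X\times X$), so you should cite it as such rather than as a consequence.
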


Let us remark in passing that the uniform spaces provide a natural general framework for the notion of completeness as well. A uniform space $X$ is complete if it has the following property. Let  $\mathscr F$ be a non-empty family of non-empty subsets of $X$, closed under finite intersections and such that for every $U\in{\mathcal U}_X$ there is $F\in{\mathscr F}$ with $F\times F\subseteq U$ (the Cauchy property of $\mathscr F$). Then there is a (unique) $x\in X$ (the limit of $\mathscr F$) whose every neighbourhood contains an element of $\mathscr F$ as a subset. In this way, the concept of a uniform space, introduced by Andr\'e Weil in the monograph \cite{weil}, provides the most general unifying framework allowing to treat both uniform continuity and completeness.

\subsection{Calculus in locally convex spaces\label{s:calculus}}

Let $U\subseteq E$ be an open subset of a locally convex space $E$, and let $f\colon U\to F$ be a function with values in another locally convex space. The function $f$ is said to be {\em differentiable at a point} $x\in U$ if for every $v\in E$ there exists a limit
\[D^1f(x)(v) = \lim_{t\to 0}\frac 1t\left[f(x+tv)-f(x) \right].\]
One says that $f$ is $C^1$ on $U$ if $f$ itself is continuous, the derivative exists at every point $x\in U$, and the resulting function
\[D^1f\colon U\times E\to F\]
is continuous. (This definition corresponds to the concept of Michal--Bastiani $C^1$-function \cite{michal,bastiani}, and to the concept of a $C^1_{\mathbf k}$ function in \cite{keller}.)

Now one can define recursively the $k$-th derivative of $f$, $k\geq 1$: for all $x\in U$ and $v_1,\ldots,v_k\in E$,
\[D^kf(x)(v_1,v_2,\ldots,v_k) = D^1\left(D^{k-1}(x)(v_1,\ldots,v_{k-1}) \right)(v_k).\]
A function $f$ is of class $C^k$ if the $k$-th derivative exists and all the maps
\[D^if\colon U\times E^i\to F,\]
$i=0,1,\ldots,k$, are continuous.

Fixing $x$, we obtain a map $D^kf(x)\colon E^k\to F$. 
For $C^k$-functions, this map is $k$-linear and symmetric; one can assume that $E$ has dimension $k+1$ and (with the help of Hahn--Banach theorem) that $F=\R$, and now it follows from the classical case.

\begin{example}
If $T\colon E\to F$ is a linear operator, then for every $x,v\in E$ we have
\begin{align*}
  D^1T(x)(v) &= \frac{d}{dt}\vert_{t=0}[T(x+tv)-T(x)]\\
&=  \frac{d}{dt}\vert_{t=0}tT(v) \\
&= T(v),
\end{align*}
that is, $D^1T(x)= T$ at every $x$.
\label{ex:linear}
\end{example}

The collection of all $C^k$ functions from an open subset $U$ of $E$ to $F$ forms a linear space.
We denote this space $C^k(U,F)$.
The topology of uniform convergence on compacta together with $k$ first derivatives is given on this space by the following collection of prenorms:
\[p_{K_1,K_2,i}(f) = \max_{x\in K_1,v_1,\ldots,v_i\in K_2}p\left(D^if(x)(v_1,\ldots,v_i)\right),\]
where $i=0,1,\ldots,k$, $K_1$ is a compact subset of $U$, $K_2$ is a compact subset of $E$ and $p$ is a continuous prenorm on $F$. This topology makes $C^k(U,F)$ into a locally convex space. One can equally well describe this topology via basic neighbourhoods of a function $f$: those consist of all $C^k$-smooth functions $g\colon U\to F$ satisfying
\[\forall i=0,1,\ldots,k,~~\forall x\in K_1,\forall v_1,\ldots,v_i\in K_2,~D^if(x)(v_1,\ldots,v_i)-D^ig(x)(v_1,\ldots,v_i)\in V,\]
where $K_1,K_2$ are as above and $V$ is a neighbourhood of zero in $F$.
See \cite{neeb}, Def. I.5.1 on p. 317. The special case where $E$ is finite-dimensional and $F$ is an arbitrary locally convex space appears in Ch. 40 of \cite{treves2}, p. 412. If both $E$ and $F$ have finite dimension, we get the usual $C^k$-topology on the space of $C^k$ functions.

\begin{theorem}[Chain rule; Prop. I.2.3(vi) in \cite{neeb}]
  Lert $E,F,G$ be locally convex spaces.
  The composition of two $C^1$ mappings, $f\colon U\to V$ and $g\colon V\to G$, where $U$ is open in $E$ and $V$ is open in $F$,
  is again of class $C^1$, and for $x\in U$
\[D^1(g\circ f)(x) = D^1g(f(x))\circ D^1f(x).\]
\label{th:chain}
\end{theorem}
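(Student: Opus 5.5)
The plan is to prove the chain rule through the ``difference quotient extension'' characterization of $C^1$ maps, in the spirit of \cite{neeb}. This avoids vector-valued integrals, which need not exist in a non-complete $F$. For $f\colon U\to F$ of class $C^1$, set
\[U^{[1]}=\{(x,v,t)\in U\times E\times\R\colon x+tv\in U\},\]
which is open in $E\times E\times\R$. Define $f^{[1]}\colon U^{[1]}\to F$ by
\[f^{[1]}(x,v,t)=\tfrac1t\left[f(x+tv)-f(x)\right]\ \text{ for } t\neq 0,\qquad f^{[1]}(x,v,0)=D^1f(x)(v).\]
The first and key step is to show that $f^{[1]}$ is continuous. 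Away from $t=0$ this is clear from the continuity of $f$. At a point $(x_0,v_0,0)$, I fix a continuous prenorm $p$ on $F$ and use continuity of $D^1f$ at $(x_0,v_0)$. This gives a neighbourhood $N$ of $(x_0,v_0,0)$ such that, for $(x,v,t)\in N$:
\begin{itemize}
\item the whole segment $\{x+stv\colon s\in[0,1]\}$ lies in $U$ (this follows from continuity of $(x,v,t,s)\mapsto x+stv$ and compactness of $[0,1]$);
\item $p\bigl(D^1f(x+stv)(v)-D^1f(x_0)(v_0)\bigr)<\e$ for all $s\in[0,1]$.
\end{itemize}

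Next I reduce to scalars. Put $y=f^{[1]}(x,v,t)-D^1f(x_0)(v_0)$. The Hahn--Banach theorem in its dominated form gives a linear functional $\lambda$ with $\abs{\lambda}\leq p$ and $\lambda(y)=p(y)$. To get it, define $\lambda$ on $\R y$ and extend it dominated by $p$; such a $\lambda$ is automatically continuous. The real function $\varphi(s)=\lambda(f(x+stv))$ is $C^1$ on $[0,1]$ with $\varphi'(s)=t\,\lambda(D^1f(x+stv)(v))$, so the classical mean value theorem gives $s_0\in[0,1]$ with
\[p(y)=\lambda\bigl(D^1f(x+s_0tv)(v)\bigr)-\lambda\bigl(D^1f(x_0)(v_0)\bigr)<\e .\]
For $t=0$ the same bound holds directly. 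Hence $f^{[1]}$ is continuous at $(x_0,v_0,0)$. I expect this step to be the main technical point, since it is the only place where the locally convex structure is really used.

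With this in hand, the chain rule itself is short. Fix $x\in U$ and $v\in E$, and set $w_t=f^{[1]}(x,v,t)$. Then $f(x)+tw_t=f(x+tv)\in V$, so $(f(x),w_t,t)\in V^{[1]}$. For $t\neq 0$,
\[\tfrac1t\left[g(f(x+tv))-g(f(x))\right]=\tfrac1t\left[g(f(x)+tw_t)-g(f(x))\right]=g^{[1]}(f(x),w_t,t).\]
As $t\to0$ we have $w_t\to D^1f(x)(v)$ by continuity of $f^{[1]}$. Continuity of $g^{[1]}$ then shows that the right-hand side tends to
\[g^{[1]}\bigl(f(x),D^1f(x)(v),0\bigr)=D^1g(f(x))\bigl(D^1f(x)(v)\bigr).\]
So $g\circ f$ is differentiable at $x$, with $D^1(g\circ f)(x)=D^1g(f(x))\circ D^1f(x)$.

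Finally, I check the $C^1$ requirements. The map $g\circ f$ is continuous as a composition of continuous maps. The derivative
\[(x,v)\mapsto D^1g\bigl(f(x),D^1f(x)(v)\bigr)\]
is the composition of the continuous maps $(x,v)\mapsto(f(x),D^1f(x)(v))$ and $D^1g\colon V\times F\to G$, hence continuous on $U\times E$. This establishes that $g\circ f$ is of class $C^1$ with the stated formula.
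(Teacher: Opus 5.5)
Your proof is correct. The paper gives no argument of its own and simply cites Prop.~I.2.3(vi) of \cite{neeb}, whose proof follows your route: continuity of the extended difference quotient $f^{[1]}$ on the open set $U^{[1]}$, then the identity $\frac1t\left[g(f(x+tv))-g(f(x))\right]=g^{[1]}\bigl(f(x),f^{[1]}(x,v,t),t\bigr)$ and passage to the limit $t\to0$. The only difference is how you get continuity of $f^{[1]}$ at $t=0$: you use a $p$-dominated Hahn--Banach functional and the scalar mean value theorem, instead of the weak-integral formula $f^{[1]}(x,v,t)=\int_0^1 D^1f(x+stv)(v)\,ds$, which neatly avoids any completeness assumption on $F$.
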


\begin{corollary}[{\em ibid.}] The composition of two $C^k$ mappings is of class $C^k$.
\end{corollary}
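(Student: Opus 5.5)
The plan is to argue by induction on $k$, with Theorem \ref{th:chain} as the base case $k=1$. To set up the induction step I will use the following reformulation of the recursive definition in Subsection \ref{s:calculus}.

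\emph{Reformulation.} A map $h\colon U\to F$ is of class $C^k$, $k\geq 1$, if and only if $h$ is $C^1$ and the map $D^1h\colon U\times E\to F$ is of class $C^{k-1}$ on the open set $U\times E$ of the locally convex space $E\times E$.

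This is the standard characterization of Michal--Bastiani $C^k$ maps (cf. \cite{neeb}, Sect. I.2). It can be checked directly. The derivative of $D^1h$ at $(x,v)$ in direction $(w,u)$ splits as
\[D^2h(x)(v,w)+D^1h(x)(u),\]
because $D^1h(x)$ is linear in $v$. Iterating, the higher derivatives of $D^1h$ are finite sums of the maps $D^ih$, $i\leq k$, evaluated at suitable arguments. Their continuity is therefore equivalent to the continuity of $D^ih$ for $i\leq k$.

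Two further routine facts are needed. First, a map into a product $F_1\times F_2$ is $C^m$ if and only if both of its components are $C^m$, since derivatives are computed coordinatewise. Second, a $C^m$ map is also $C^{m-1}$.

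\emph{Induction step.} Assume the statement holds for $k-1\geq 1$, for all triples of locally convex spaces. Let $f\colon U\to V$ and $g\colon V\to G$ be $C^k$. By Theorem \ref{th:chain}, $g\circ f$ is $C^1$ and
\[D^1(g\circ f)(x)(v)=D^1g\bigl(f(x)\bigr)\bigl(D^1f(x)(v)\bigr).\]
So $D^1(g\circ f)=D^1g\circ\Phi$, where
\[\Phi\colon U\times E\to V\times F,\qquad \Phi(x,v)=\bigl(f(x),D^1f(x)(v)\bigr).\]
The two components of $\Phi$ are handled as follows.
\begin{itemize}
\item The first component $(x,v)\mapsto f(x)$ is $f$ composed with the continuous linear projection onto the first factor. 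It is $C^{k-1}$ by the inductive hypothesis and Example \ref{ex:linear}; alternatively, this can be checked directly.
\item The second component $D^1f$ is $C^{k-1}$ by the reformulation applied to $f$.
\end{itemize}
Hence $\Phi$ is $C^{k-1}$. Also $D^1g\colon V\times F\to G$ is $C^{k-1}$ by the reformulation applied to $g$. The inductive hypothesis, applied to the spaces $E\times E$, $F\times F$ and $G$, shows that $D^1g\circ\Phi$ is $C^{k-1}$. By the reformulation once more, $g\circ f$ is $C^k$, which completes the induction.

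The main obstacle is the reformulation itself, namely the equivalence between the recursive definition via the iterated $D^k$ and the criterion "$D^1h$ is $C^{k-1}$ on $U\times E$". Both directions need the explicit formula expressing the derivatives of $D^1h$ through $D^2h,\dots,D^kh$. Establishing that formula uses the linearity and symmetry of the $D^ih(x)$, and I expect it to be the only genuinely technical step. If it becomes too cumbersome, I would instead cite \cite{neeb}, Prop. I.2.3, for the statement directly.
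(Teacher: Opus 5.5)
Your proof is correct and is essentially the argument behind the paper's citation: the paper gives no proof here and defers to Neeb, where closure of $C^k$ under composition is obtained by induction from the chain rule and the criterion ``$h$ is $C^k$ iff $h$ is $C^1$ and $D^1h\colon U\times E\to F$ is $C^{k-1}$'', via $D^1(g\circ f)=D^1g\circ(f\circ\mathrm{pr}_1,D^1f)$, exactly as you do. In verifying that criterion, only multilinearity (not symmetry) of the $D^ih(x)$ is needed, and the converse direction follows at once from $D^{j+1}h(x)(v,w_1,\ldots,w_j)=D^j(D^1h)(x,v)\bigl((w_1,0),\ldots,(w_j,0)\bigr)$.
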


\begin{lemma}
  Let $f\colon U\to F$ be a $C^k$ mapping from an open subset $U$ of a locally convex space $E_1$ to a locally convex space $F$, and let $T\colon E\to E_1$ be a continuous linear operator. Then for all $x\in T^{-1}(U)$ and $v_1,\ldots,v_k\in E$,
  \[D^k(f\circ T)(x)(v_1,v_2,\ldots,v_k)= D^kf(T(x))(T(v_1),\ldots,T(v_k)).\]
  \label{l:compo}
\end{lemma}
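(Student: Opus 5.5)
We argue by induction on $k$, using the chain rule (Theorem \ref{th:chain}) together with the computation in Example \ref{ex:linear}. The case $k=0$ is trivial. For $k=1$, note first that $f\circ T$ is $C^1$ on the open set $T^{-1}(U)$, because $T$ is continuous and linear, hence $C^\infty$. Theorem \ref{th:chain} combined with Example \ref{ex:linear} then gives
\[D^1(f\circ T)(x)(v)=D^1f(T(x))\bigl(D^1T(x)(v)\bigr)=D^1f(T(x))(T(v)).\]

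For the inductive step, assume the formula holds for $k-1$. Fix $v_1,\ldots,v_{k-1}\in E$ and set $w_j=T(v_j)\in E_1$. Consider the map
\[\varphi\colon U\to F,\quad \varphi(y)=D^{k-1}f(y)(w_1,\ldots,w_{k-1}).\]
Since $f$ is $C^k$, $\varphi$ is $C^1$: its derivative is $D^1\varphi(y)(w)=D^kf(y)(w_1,\ldots,w_{k-1},w)$, which follows from the recursive definition of $D^k$, and this is continuous in $(y,w)$. The inductive hypothesis says exactly that
\[D^{k-1}(f\circ T)(x)(v_1,\ldots,v_{k-1})=\varphi(T(x))\quad\text{for } x\in T^{-1}(U).\]
By the recursive definition, $D^k(f\circ T)(x)(v_1,\ldots,v_k)$ is the derivative of the left side in the direction $v_k$. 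So it equals $D^1(\varphi\circ T)(x)(v_k)$, and the case $k=1$ applied to $\varphi$ gives
\[D^1(\varphi\circ T)(x)(v_k)=D^1\varphi(T(x))(T(v_k))=D^kf(T(x))(T(v_1),\ldots,T(v_k)).\]
This closes the induction.

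The only delicate point is to make sure the $C^k$ property of $f\circ T$ (continuity of all $D^i(f\circ T)$ on $T^{-1}(U)\times E^i$) is obtained along the way, so that the recursion is legitimate. This is immediate from the formula itself: it expresses $D^i(f\circ T)$ as $D^if$ composed with the continuous map $(x,v_1,\ldots,v_i)\mapsto(T(x),T(v_1),\ldots,T(v_i))$. Alternatively, it follows from the Corollary that compositions of $C^k$ maps are $C^k$.
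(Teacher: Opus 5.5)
Your proof is correct and follows essentially the same route as the paper. The case $k=1$ comes from the chain rule and Example~\ref{ex:linear}, and the induction fixes $v_1,\ldots,v_{k-1}$ and applies the first-order formula to the composition of $y\mapsto D^{k-1}f(y)(T(v_1),\ldots,T(v_{k-1}))$ with $T$; your added checks that this auxiliary map is $C^1$ and that $f\circ T$ is $C^k$ fill in details the paper leaves implicit.
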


\begin{proof}
  The Chain Rule and example \ref{ex:linear} imply the formula for $k=1$, that is, the derivative of the composition of a $C^1$ function $f$ with a linear operator $T$:
  \begin{equation}
    D^1(f\circ T)(x)(v) = D^1f(T(x))(T(v)).
    \label{eq:k=1}
  \end{equation}
  Now suppose the formula is verified for some $i$, $1\leq i\leq k-1$:
  \[D^i(f\circ T)(x)(v_1,v_2,\ldots,v_i)= D^if(T(x))(T(v_1),\ldots,T(v_i)).\]
  Fix the values $v_1,\ldots,v_i$. Then
  \[D^{i+1}(f\circ T)(x)(v_1,\ldots,v_{i+1})=
  D^1\left(D^i(f\circ T)(x)(v_1,v_2,\ldots,v_i) \right)(v_{i+1})\]
  is the derivative at the point $x$ of the composition of the function \[y\mapsto D^if(y)(T(v_1),\ldots,T(v_i))\]
  with the linear operator $T$ (here we used the induction hypothesis), evaluated at $v_{i+1}$. The formula (\ref{eq:k=1}) gives
  the desired expression, $D^{i+1}f(T(x))(T(v_1),\ldots,T(v_i),T(v_{i+1}))$.
  \end{proof}

\subsection{Tensor products}
Given two vector spaces, $E$ and $F$, their {\em algebraic tensor product}, $E\otimes F$, is a vector space spanned by the elementary tensors, $e\otimes f$, $e\in E$, $f\in F$, subject to the defining relations $(\lambda a+\mu b)\otimes c=\lambda (a\otimes c)+\mu(a\otimes c)$ and $a\otimes(\lambda b+\mu c)=\lambda (a\otimes c)+\mu (b\otimes c)$; the only other relations between elements of $E\otimes F$ are those derived from these two. See sect. 1.6 in \cite{jarchow} or p. 92 in \cite{schaefer}.

The case of interest for us is where $E=\mathscr F$ is a space of real-valued functions on a set $X$, and $F$ is any vector space. Then an elementary tensor $f\otimes v$ in $\mathscr F\otimes F$ can be interpreted as the mapping 
\[f\otimes v\colon X\ni x\mapsto f(x)v\in F.\]
Thus, an arbitrary element $f$ of $\mathscr F\otimes F$ can be written as a function
\[f(x)=\sum_{i=1}^n f_i(x)\otimes v_i=\sum_{i=1}^nf_i(x)v_i,\]
for some $n\in\N$, $f_i\in\mathscr F$, and $v_i\in F$, $i=1,2,\ldots,n$. In particular, $f$ takes its values in a finite-dimensional vector subspace of $F$ spanned by $v_1,v_2,\ldots,v_n$.

If $\mathscr F=C^k(U,\R)$ consists of real functions of class $C^k$ on an open subset $U$ of a locally convex space, then its tensor product $C^k(U,\R)\otimes F$ with a locally convex space $F$ consists of all functions on $U$ of class $C^k$ with values in $F$ having finite-dimensional ranges.

\begin{theorem}[See \cite{treves2}, Prop. 44.2] Let $U$ be an open subset of $\R^m$, and let $F$ be a locally convex space. The tensor product $C^k(U,\R)\otimes F$ is dense in $C^k(U,F)$ equipped with the $C^k$ compact-open topology.
  \label{th:treves}
  \end{theorem}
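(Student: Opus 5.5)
The plan is to approximate $f$ by a \emph{localized Taylor expansion}: a smooth partition of unity on a fine cubical grid, with each piece multiplied by the $k$-th Taylor polynomial of $f$ at the centre of its cube. Every term is a finite sum of (scalar $C^k$ function)$\,\otimes\,$(vector of $F$), so the result lies in $C^k(U,\R)\otimes F$. I deliberately avoid convolution with a mollifier, since that would require vector-valued integration and hence completeness of $F$, which is not assumed.

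\textbf{Setup.} Fix a basic neighbourhood of $f$, given by compacta $K_1\subseteq U$, $K_2\subseteq\R^m$ and a continuous prenorm $p$ on $F$; we must make
\[
\max_{i\le k}\ \max_{x\in K_1,\,v_j\in K_2} p\bigl(D^i(f-g)(x)(v_1,\dots,v_i)\bigr)
\]
small. Since $K_2$ is bounded and $D^i(f-g)(x)$ is $i$-linear, it suffices to control the partial derivatives $p(\partial^\alpha(f-g)(x))$ for $\abs\alpha\le k$, uniformly on $K_1$. Choose $r>0$ so that the closed $2r\sqrt m$-neighbourhood $L$ of $K_1$ is a compact subset of $U$.

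\textbf{Construction of $g$.}
\begin{itemize}
\item Fix $\varphi\in C^\infty_c(\R^m)$ such that the translates $\varphi(\cdot-j)$, $j\in\mathbb Z^m$, form a partition of unity. For example, take a smoothed indicator of a cube, normalized; each point then meets at most $3^m$ supports.
\item For $\delta<r$ put $\varphi_j(x)=\varphi(x/\delta-j)$ and $x_j=\delta j$. Keep only the finitely many $j$ whose support meets $K_1$; all of these supports lie in $L$, and on $K_1$ these $\varphi_j$ still sum to $1$.
\item Let $T_j(x)=\sum_{\abs\alpha\le k}\partial^\alpha f(x_j)(x-x_j)^\alpha/\alpha!$, and set $g=\sum_j\varphi_j T_j$. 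Multiplying each $\varphi_j$ by a cutoff equal to $1$ near $L$ makes every summand defined on all of $U$.
\end{itemize}

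\textbf{Estimate.} On a neighbourhood of $K_1$ we have $f-g=\sum_j\varphi_j(f-T_j)$. By the Leibniz rule,
\[
\partial^\alpha(f-g)=\sum_j\sum_{\beta\le\alpha}\binom\alpha\beta\,\partial^{\alpha-\beta}\varphi_j\;\partial^\beta(f-T_j),
\]
where $\abs{\partial^{\alpha-\beta}\varphi_j}\le C\delta^{-\abs{\alpha-\beta}}$ with $C$ depending only on $\varphi$ and $k$.

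The key claim is that on $\operatorname{supp}\varphi_j$, which has diameter $O(\delta)$, we have $p(\partial^\beta(f-T_j))\le \omega(\delta)\,\delta^{k-\abs\beta}$ with $\omega(\delta)\to0$. Granting this, each term is $O(\omega(\delta)\,\delta^{k-\abs\alpha})\le O(\omega(\delta))$ because $\delta<1$ and $\abs\alpha\le k$. Only $3^m$ indices $j$ contribute at any point, so the total is $O(\omega(\delta))$, which is below any prescribed $\e$ for small $\delta$.

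\textbf{Main obstacle: the Taylor remainder estimate in $F$.} I would reduce to scalars. Let $\ell$ be a linear functional with $\abs{\ell}\le p$. Then $\ell\circ f$ is a scalar $C^k$ function, and $\partial^\beta(\ell\circ f-\ell\circ T_j)$ is the Taylor remainder of order $k-\abs\beta$ of $\partial^\beta(\ell\circ f)$ at $x_j$. Its integral form equals $(x-x_j)^{\gamma}$ times an average of
\[
\ell\bigl(\partial^{\beta+\gamma}f(x_j+t(x-x_j))-\partial^{\beta+\gamma}f(x_j)\bigr),\qquad \abs{\beta+\gamma}=k .
\]
The $k$-th derivatives are uniformly continuous on the compact set $L$ (continuous maps on a compactum are uniformly continuous, or use Lemma~\ref{l:uniformnbhd}). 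So $p$ of that increment is at most $\omega(\delta)$, uniformly in $\ell$.

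Finally, $p(y)=\sup\{\abs{\ell(y)}:\ \ell \text{ linear},\ \abs{\ell}\le p\}$. This follows from the Hahn--Banach theorem (Theorem~\ref{th:hb}) applied to the normed space $F/N_p$ and composed with the quotient map. Taking the supremum over $\ell$ therefore yields the claim, and with it the density of $C^k(U,\R)\otimes F$ in $C^k(U,F)$.
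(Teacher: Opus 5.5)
The paper gives no proof of this statement. It quotes Tr\`eves \cite{treves2}, Prop.~44.2, and the only part of that proof that appears in the paper is the zeroth-order construction $\sum_i h_i(x)f(x_i)$ from a partition of unity subordinate to a fine cover (the Remark after eq.~(\ref{eq:tildefp}) points to it). Your argument is a correct, self-contained proof, and it is the higher-order version of that construction.

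With plain values $f(x_j)$ one cannot control derivatives, since each derivative that falls on a cutoff costs $\delta^{-1}$. Replacing $f(x_j)$ by the order-$k$ Taylor polynomial $T_j$ gains $\omega(\delta)\,\delta^{k-\abs\beta}$ on $\partial^\beta(f-T_j)$. Because $\abs\alpha\le k$, this pays for the $\delta^{-\abs{\alpha-\beta}}$ lost on the cutoffs.

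What your route buys over the citation, or over a mollifier argument, is that nothing is ever integrated in $F$. The only analysis is the scalar Taylor formula for $\ell\circ f$ together with the identity $p(y)=\sup\{\abs{\ell(y)}\colon \abs\ell\le p\}$, so completeness of $F$ never enters. This matters here, because Theorem~\ref{th:main2} assumes only $E$ complete and applies the density result to an arbitrary $F$. Your approximants even lie in $C^\infty_c(U)\otimes F$. Since each basic neighbourhood involves only finitely many derivatives, the same construction also handles $k=\infty$.

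Three small points of precision:
\begin{enumerate}
\item The sup formula for $p$ needs the dominated (norm-preserving) form of Hahn--Banach. That is slightly more than Theorem~\ref{th:hb} as stated in the paper, which only gives some continuous extension with no bound.
\item The identity $f-g=\sum_j\varphi_j(f-T_j)$ must hold on a neighbourhood of $K_1$ in order to differentiate at points of $K_1$. It does, because the union of the supports of the discarded $\varphi_j$ is closed by local finiteness and misses $K_1$.
\item The extra cutoff is unnecessary, since $\operatorname{supp}\varphi_j\subseteq L$ is already compact in $U$.
\end{enumerate}
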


\section{Encoding/decoding of continuous functions\label{s:proofs}}

In this Section we give a proof of Theorem \ref{th:main}. 
We will first treat separately the particular case where $E,F$ are normed spaces.

\subsection{Case of normed spaces}

\begin{theorem}
  Let $f\colon E\to F$ be a continuous mapping between two normed spaces. Given a compact set $K\subseteq E$ and $\e>0$, there are finite dimensional normed spaces $E_1,F_1$, bounded linear operators $T\colon E\to E_1$, $S\colon F_1\to F$ and a continuous mapping $g\colon E_1\to F_1$ such that for every $x\in K$,
  \[\norm{f(x)-S\circ g\circ T(x)}_F<\e.\]
\end{theorem}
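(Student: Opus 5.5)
The plan has two halves: first the decoding side (the operator $S$ and the space $F_1$), then the encoding side (the operator $T$ and the space $E_1$), which is where the real work lies.

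\emph{Decoding side.} The set $f(K)$ is compact in $F$, so choose a finite $\e/2$-net $b_1,\dots,b_n$ of $f(K)$. Cover $f(K)$ by the open balls $B_{\e/2}(b_j)$ and use Theorem \ref{th:unity} to get a subordinate partition of unity $h_1,\dots,h_n$ on $f(K)$. Put $F_1=\R^n$ and $S(t)=\sum_j t_jb_j$, and define $\phi\colon f(K)\to F_1$ by $\phi(y)=(h_j(y))_j$. For $y\in f(K)$,
\[\norm{S\phi(y)-y}\le\sum_j h_j(y)\norm{b_j-y}<\e/2.\]
It therefore suffices to produce $T\colon E\to E_1$ and a continuous $g\colon E_1\to F_1$ with $g\circ T$ uniformly close to $\phi\circ f$ on $K$. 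The required closeness is $\e/(2\max_j\norm{b_j}+1)$ in the $\ell^1$ norm.

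\emph{Encoding side, reduction to a Hilbert space.} This is the key step, and it must avoid the approximation property of $E$.
\begin{itemize}
\item The closed linear span $E_0$ of $K$ is separable, by Proposition \ref{p:sep}.
\item Corollary \ref{c:iell2} gives an injective norm-one operator $j\colon E_0\to\H$ into a separable Hilbert space.
\item Since $j|_K$ is a continuous injection of a compact set, Proposition \ref{p:homeo} makes $j|_K$ a homeomorphism onto $j(K)$.
\item Hence $\psi=\phi\circ f\circ (j|_K)^{-1}$ is continuous on the compact set $j(K)\subseteq\H$. By Corollary \ref{c:katetov} it extends to a bounded uniformly continuous map $\tilde\psi\colon\H\to F_1$ (uniformly continuous because $j(K)$ is compact).
\item Choose $\delta>0$ such that $\norm{u-v}<\delta$ implies $\norm{\tilde\psi(u)-\tilde\psi(v)}<\e'$. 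By Theorem \ref{th:approx}, pick an orthogonal projection $\pi$ of finite rank with $\norm{\pi u-u}<\delta$ for all $u\in j(K)$.
\end{itemize}

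\emph{Assembling the maps.} Let $E_1=\pi(\H)$. The composition $\pi\circ j\colon E_0\to E_1$ is a bounded operator into a finite-dimensional space, so by Corollary \ref{c:hb} it extends to a bounded operator $T\colon E\to E_1$. Put $g=\tilde\psi|_{E_1}$, which is continuous. For $x\in K$,
\[g(T(x))=\tilde\psi(\pi j x)\approx_{\e'}\tilde\psi(jx)=\phi(f(x)),\]
and composing with $S$ finishes the proof.

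The main obstacle I expect is exactly the encoding step: a finite-rank approximation of the identity on $E$ need not exist, since $E$ may lack the approximation property. The plan is to transport $K$ homeomorphically into a Hilbert space, where finite-rank projections do exist. Continuity of $f$ is then used only on the compact set $K$, through the inverse homeomorphism, and the Katětov extension supplies $g$ off the image of $K$.
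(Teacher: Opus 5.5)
Your proof is correct and is essentially the paper's argument. The encoding half matches the paper step for step: the separable closed span of $K$, an injective norm-one map into a separable Hilbert space that is a homeomorphism on $K$, the Kat\v{e}tov extension, a finite-rank orthogonal projection, and a Hahn--Banach extension of $\pi\circ j$ to $T$. The only difference is cosmetic and lies on the decoding side. You build the partition of unity on $f(K)$ around an $\e/2$-net and take $F_1=\R^n$ with $S(t)=\sum_j t_jb_j$, which is why you need the bookkeeping with $\max_j\norm{b_j}$. The paper instead builds the partition of unity on $K$ using uniform continuity of $f|_K$, sets $\tilde f=\sum_i h_i f(x_i)$, and lets $S$ be the inclusion of $\mathrm{span}\{f(x_i)\}$.
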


The rest of the subsection is devoted to the proof.
Let a continuous map $f\colon E\to F$, a compact set $K\subseteq E$ and an $\e>0$ be given.

We start with the construction of the subspace $F_1$.
As the restriction of $f$ to $K$ is uniformly continuous, there is $\delta>0$ so that if $x,y\in K$ and $\norm{x-y}_E<\delta$, then $\norm{f(x)-f(y)}_F<\e$. Select a finite cover of $K$ with open $\delta$-balls, $B_\delta(x_i)$, $x_i\in K$, $i=1,2,\ldots,n$. Fix a continuous partition of unity subordinated to this cover, that is, a family of continuous functions $h_i\colon K\to [0,1]$, vanishing outside of $B_\delta(x_i)$, with $\sum_{i=1}^nh_i\equiv 1\vert_K$ (Theorem \ref{th:unity}). Define for each $x\in K$
\[\tilde f(x) = \sum_{i=1}^n h_i(x)f(x_i).\]
The function $\tilde f$, defined on $K$, is continuous and takes values in the finite-dimensional subspace $F_1$ of $F$ spanned by the vectors $f(x_1)$, $f(x_2)$, $\ldots$, $f(x_n)$. Also, for all $x\in K$ we have
\begin{align}
  \norm{f(x)-\tilde f(x)}_F & = \norm{\sum_{i=1}^n h_i(x)f(x)-\sum_{i=1}^n h_i(x)f(x_i)}_F\nonumber \\
  & = \norm{\sum_{x\in B_\delta(x_i)} h_i(x)\left[f(x)-f(x_i)\right]}_F\nonumber \\
  &\leq \sum_{x\in B_\delta(x_i)} h_i(x) \e \nonumber \\
  &\leq \e.  \label{eq:tildef}
\end{align}

\begin{remark}
  In our original proof, the construction of the space $F_1$ and the function $\tilde f$ were based on the Michael Selection Theorem \cite{michael}. The argument with partitions of unity, indicated to us by William Johnson, is technically simpler.
  \end{remark}

Now we take care of $E$. Since $K$ is a metrizable compact space, it is separable (Prop. \ref{p:sep}), hence the same is true of the closed linear span of $K$ in $E$. Denote this span $\tilde E$. 
Fix a linear injection $i\colon \tilde E\to\ell^2$ having operator norm $1$ (Corol. \ref{c:iell2}). The restriction $i\vert_K$ is a continuous injection, so it is a homeomorphism on its image (Prop. \ref{p:homeo}). The function $\tilde f\circ (i\vert_K)^{-1}$ is therefore a continuous map from the compact subset $i(K)\subseteq\ell^2$ to $F_1$, hence uniformly continuous.
An application of the corollary \ref{c:katetov} to the Kat\v etov Extension Theorem  allows to extend this function
to a uniformly continuous function $\breve f\colon\ell^2\to F_1$, with the property
\begin{equation}
  \breve f\circ i\vert_K = \tilde f\vert_K.
  \label{eq:brevetilde}
  \end{equation}
Now, select $\delta>0$ with the property that for all $x,y\in\ell^2$,
\[\norm{x-y}_2<\delta\Rightarrow\norm{\breve f(x)-\breve f(y)}_{F_1}<\e.\]
Choose $n$ so large that the orthogonal projection $\pi_n$ of $\ell^2$ onto the subspace $\ell^2(n)$ spanned by the first $n$ basic vectors has the property: for all $x\in i(K)$, $\norm{x-\pi_n(x)}_2<\delta$ (Th. \ref{th:approx}). Let us write it like this:
\begin{equation}
  \forall x\in K,~\norm{i(x)-\pi_ni(x)}_2<\delta.\label{eq:pin}
\end{equation}
The composition $\pi_n\circ i\colon \tilde E\to\ell^2(n)$ is a bounded linear operator of norm $1$ with values in a finite dimensional space, and by Corollary \ref{c:hb} of the Hahn--Banach theorem, it extends
to a bounded linear operator from all of $E$ to $\ell^2(n)$, which we denote $T$. 

Now, set:
$E_1=\ell^2(n)$, $g=\breve f\vert_{\ell^2(n)}$, and denote $S$ the canonical embedding of the normed subspace $F_1$ into $F$.

If now $x\in K$, we have, using the notation $a\overset\e\approx b$ to mean $\norm{a-b}< \e$:
\begin{align*}
\mbox{\tiny eq. (\ref{eq:tildef})}~~  f(x) &  \overset \e\approx \tilde f(x) \\
\mbox{\tiny eq. (\ref{eq:brevetilde})} &  = \breve f\circ i(x) \\
  \mbox{\tiny eq. (\ref{eq:pin})}  &  \overset \e\approx \breve f\circ\pi_n\circ i(x) \\
  &= S\circ g\circ T(x).
\end{align*}
This establishes the result.

\subsection{Case of general locally convex spaces}
Now we pass to the proof of the main theorem \ref{th:main}, adapting the same argument as above.

Let $E$ and $F$ be two locally convex spaces, and let $f\colon E\to F$ be a continuous function between them. We are given a compact set $K\subseteq E$ and a neighbourhood of zero, $W$, in $F$, and are asked to produce finite dimensional linear spaces $E_1,F_1$, continuous linear operators $T\colon E\to E_1$, $S\colon F_1\to F$, and a continuous function $g\colon E_1\to F_1$ such that for every $x\in K$,
\begin{equation}
  f(x) - S\circ g\circ T(x)\in W.
  \label{eq:aim}
\end{equation}
Without loss in generality, we can assume $W$ to be circled, that is, $\lambda W\subseteq W$ when $\abs\lambda\leq 1$. And we will prove the above with $2W$ instead of $W$, which of course makes no difference.

We start again by constructing $F_1$. Since the restriction of $f$ to the compact set $K$ is uniformly continuous, there is a neighbourhood of identity $V$ in $E$ with the property
\[\forall x,y\in K,~~x-y\in V\Rightarrow f(x)-f(y)\in W.\]
Select a finite subcover of the open cover of $K$ with the open sets $x+V$, $x\in K$. In other words, choose $x_1,x_2,\ldots,x_n\in K$ so that $\{x_1,\ldots,x_n\}+V\supseteq K$. Using Theorem \ref{th:unity}, select a continuous partition of unity on $K$ subordinated to the open cover with sets $x_i+V$, $i=1,2,\ldots, n$. That is, for each $i$, choose a continuous function $h_i\colon K\to [0,1]$ vanishing on $K$ outside of $x_i+V$ and such that $\sum_{i=1}^n h_i(x)=1$ for all $x\in K$. Define the function $\tilde f\colon K\to F$ by
\[\tilde f(x) = \sum_{i=1}^n h_i(x)f(x_i).\]
This function is continuous. It takes values in a finite-dimensional subspace of $F$, spanned by the vectors $f(x_i)$, $i=1,2,\ldots,n$, which subspace we denote $F_1$. Finally, for every $x\in K$,
\begin{align}
  f(x)-\tilde f(x) & =  \sum_{i=1}^nh_i(x)[f(x)-f(x_i)]\nonumber \\
  & = \sum_{x-x_i\in V}h_i(x)[f(x)-f(x_i)]\nonumber \\
  &\in \left( \sum_{x-x_i\in V}h_i(x)\right) W\nonumber \\
  &\subseteq W. \label{eq:tildefp}
\end{align}

\begin{remark}
  The above is a standard construction in the theory of tensor products of locally convex spaces, see, e.g., the proof of Proposition 44.2, part 1, in \cite{treves2}.
\end{remark}

Again, the operator $S\colon F_1\to F$ will be nothing more than the canonical subspace embedding $F_1\hookrightarrow F$.

Denote $\lin K$ the locally convex subspace of $E$ algebraically spanned by $K$. Suppose we manage to finish the construction with $\lin K$ in place of $E$, that is, we have a finite rank continuous linear operator $T\colon \lin K\to E_1\cong\R^m$, $m=\dim E_1$, and a continuous function $g\colon E_1\to F_1$ satisfying Eq. (\ref{eq:aim}). Applying the Hahn--Banach theorem (Corol. \ref{c:hb}),
we extend $T$ from $\lin K$ to a continuous linear operator $E\to E_1$. Obviously, for the extended operator, the property in Eq. (\ref{eq:aim}) still holds. Thus, from now on, we will assume without loss in generality that $E$ is spanned by the compact subset $K$.

Fix any norm on the finite-dimensional space $F_1$.
Since the restriction of $\tilde f$ to $K$ is uniformly continuous, for every $k\in\N_+$ there is a continuous prenorm $p_k$ on $E$ so that,
\begin{equation}
  \forall x,y\in K,~~p_k(x-y)<1 \Rightarrow \norm{\tilde f(x)-\tilde f(y)}_{F_1}<\frac 1k. \label{eq:1/k}
\end{equation}
Let $\pi_k\colon E\to E/p_k$ be the canonical quotient map of $E$ into the normed space $E/p_k$.
The direct product of the normed quotients $E_k$, $k=1,2,\ldots$ equipped with the product topology is a
locally convex space.
Define a mapping $\varpi\colon E\to\prod_{k\in\N_+}E/p_k$ by
\[\varpi(x)=\left(\pi_k(x)\right)_{k=1}^{\infty}.\]
This is a continuous linear operator. If $\varpi(x)=\varpi(y)$ for $x,y\in K$, then the property in eq. (\ref{eq:1/k}) holds for all $k$, so $\tilde f(x)=\tilde f(y)$. This means the mapping $\tilde f$ factors through $\varpi$: there is a mapping $\bar f\colon \varpi(K)\to F_1$ with
\begin{equation}
  \bar f\circ \varpi=\tilde f.
  \label{eq:factors}
  \end{equation}

Note that $\tilde f$ and $\varpi\vert_K$ are both continuous, and since $K$ is compact, they are closed mappings (Prop. \ref{p:closed}). It follows from Prop. \ref{p:3maps} that $\bar f$ is continuous as well.

Since we are now assuming $E$ to be spanned by $K$, each space $E/p_k$ is spanned by 
the image $\pi_k(K)$. It is therefore separable because $\pi_k(K)$ is a compact metric space.
Choose for every $k$ a linear injection of norm $1$ into a separable Hilbert space, $i_k\colon E/p_k\to \ell^2$ (Corol. \ref{c:iell2}). This defines a continuous injective linear operator
\[i\colon \prod_{i=1}^{\infty} E/p_k\ni (x_k)_{k=1}^{\infty}\mapsto (i_k(x_k))_{k=1}^{\infty}\in \ell_2^{\N}.\]
Here $\ell_2^{\N}$ stands for the product of countably many copies of the separable Hilbert space $\ell_2$, equipped with the product topology.
It is a multi-Hilbert Fr\'echet (complete metrizable) locally convex space. The restriction of $i$ on $\varpi(K)$ is continuous and injective, hence a homeomorphism by Prop. \ref{p:homeo}, so the function
\begin{equation}
  \breve f = \bar f\circ (i\vert_{\varpi(K)})^{-1}\colon i\circ \varpi(K)\to F_1
  \label{eq:breve}
  \end{equation}
is continuous. Hence, it is uniformly continuous, and extends to a uniformly continuous function from $\ell_2^{\N}$ to $F_1$ by Corol. \ref{c:katetov}. 
We will retain the same symbol $\breve f$ for this extension.

Select a neighbourhood of zero, $O$, in $\ell_2^{\N}$ with the property
\begin{equation}
  \forall x,y\in \ell_2^{\N},~x-y\in O\Rightarrow \breve f(x)-\breve f(y)\in W.
  \label{eq:brefefF}
\end{equation}

By Corol. \ref{c:multihilbertAP}, the space $\ell_2^{\N}$ has the approximation property. Choose a continuous linear operator $\pi$ from $\ell_2^{\N}$ to a finite-dimensional subspace, $E_1=\pi\left(\ell_2^{\N}\right)$, so that
\[\forall x\in i\circ\varpi (K),~~\pi(x)-x\in O.\]
By Eq. (\ref{eq:brefefF}), we have for such $x$
\begin{equation}
\breve f(x)-\breve f(\pi(x))\in W.
\label{eq:breveeps}
\end{equation}

The composition $\pi\circ i\circ\varpi$ is a bounded linear operator from $E$ to the finite-dimensional space $E_1$.
Finally, define a continuous function $g\colon E_1\to F_1$ by $g=\breve f\vert_{E_1}$.

Now let $x\in K$. Denote $z= i\circ\varpi(x)\in i\circ\varpi(K)$.
We have

\begin{align*}
  \mbox{\tiny eq. (\ref{eq:factors}) }~~  \tilde f(x) & = \bar f\circ \varpi(x) \\
  &= \bar f\circ (i\vert_{\varpi(K)})^{-1}\circ i\circ\varpi(x) \\
\mbox{\tiny eq. (\ref{eq:breve})}  & = \breve f\circ i\circ\varpi(x) \\
  &= \breve f(z),
  \end{align*}
hence
\begin{align*}
  f(x)-S\circ g\circ T(x) &= f(x) - \breve f(\pi(z)) \\
& =
  (f(x) -\tilde f(x)) + (\tilde f(x)- \breve f(\pi(z)) \\
  &=  (f(x) -\tilde f(x)) + (\breve f(z)- \breve f(\pi(z)) \\
\mbox{\tiny eqs. (\ref{eq:tildefp}), (\ref{eq:breveeps})}  &\in W + W \\
  &= 2W.
  \end{align*}

\section{Smooth decoding/encoding\label{s:smooth}}

In this Section, we prove Theorem \ref{th:main2}.

\subsection{(1)$\Rightarrow$(2): Necessity of the Approximation Property}
Consider the identity mapping $f=\Id_E\colon E\to E$, $f(x)=x$. According to Example \ref{ex:linear}, at every point $x\in E$ 
\begin{equation}
  D^1f(x) = \Id_E.
  \label{eq:identity}
  \end{equation}

Now let $K$ be a compact subset of $E$, and let $V$ a neighbourhood of zero in $E$.
The subset $\{0\}\times K$ is compact in $E\times E$. 
By our assumption on $E$, there exist finite-dimensional spaces $E_1,F_1$ and continuous linear operators $T\colon E\to E_1$, $S\colon F_1\to E$ such that for every $y\in K$,
\begin{equation}
  D^1f(0)(y) - D^1(SgT)(0)(y)\in V.
  \label{eq:approxim}
\end{equation}
By the Chain Rule \ref{th:chain} and Example \ref{ex:linear},
\begin{align*}
  D^1(SgT)(0) &= S\circ D^1g(T(0))\circ T\\
  &= S\circ D^1g(0)\circ T.
\end{align*}
Notice that $R=S\circ D^1g(0)\circ T$ is a finite rank continuous linear operator.
Eqs. (\ref{eq:identity},\ref{eq:approxim}) imply:
\[\forall y\in K, ~y-R(y)\in V.\]
In other words, the identity operator $\Id_E$ is approximated in the compact-open topology with finite rank operators. This means that $E$ has the AP.

\subsection{(2)$\Rightarrow$(1): Sufficiency of the Approximation Property}
Let $f\colon E\to F$ be a $C^k$-function, where $F$ is a locally convex space. Let $K\subseteq E$ be a compact subset, and $V$ a neighbourhood of zero in $F$.

The derivatives $D^if$ are all continuous as mappings between locally convex spaces $E\times E^i$ and $F$, so one can apply lemma \ref{l:uniformnbhd} and choose a neighbourhood $U$ of zero in $E$ with the property that for all $i=0,1,\ldots,k$, 
$x,v_1,\ldots,v_i\in K$ and $u_0,u_1,\ldots,u_k\in U$, 
\[D^if(x)(v_1,\ldots,v_i)-D^if(x+u_0)(v_1+u_1,\ldots,v_i+u_i)\in V.\]
Now use the assumed approximation property of $E$ to find a finite rank continuous linear operator $\pi\colon E\to E$ so that for each $x\in K$, $\pi(x)-x\in U$. For each $i$ as above, we have by Lemma \ref{l:compo}
\[D^i(f\circ\pi)(x)(v_1,\ldots,v_i)= D^i(f(\pi(x))(\pi(v_1),\ldots,\pi(v_i)),
\]
from where
\[D^i(f\circ\pi)(x)(v_1,\ldots,v_i)-D^if(x)(v_1,\ldots,v_i)\in V.\]
Denote $E_1=\pi(E)$ and $f_1=f\vert_{E_1}\colon E_1\to F$. And $\pi$ will serve as the operator $T$.

According to Theorem \ref{th:treves}, for some $g\in C^k(E_1,\R)\otimes F\subseteq C^k(E_1,F)$ 
\begin{equation}
  D^if_1(x)(v_1,\ldots,v_i)-D^ig(x)(v_1,\ldots,v_i)\in W
  \label{eq:g}
\end{equation}
for all $i=0,1,\ldots,k$ and $x,v_1,\ldots,v_i\in \pi(K)$. This $g$ is a $C^k$ function from $E_1$ to some finite-dimensional subspace $F_1$ of $F$. Again applying Lemma \ref{l:compo}, we conclude from eq. (\ref{eq:g}): for all $i$ and $x,v_1,\ldots,v_i\in K$,
\begin{equation}
  D^i(f_1\circ\pi)(x)(v_1,\ldots,v_i)-
    D^i(g\circ\pi)(x)(v_1,\ldots,v_i)\in W.
  \end{equation}
Take as $S$ the canonical embedding $F_1\hookrightarrow F$.
We conclude: for each $i=0,1,\ldots,k$ and every $x,v_1,\ldots,v_k\in K$,

\begin{align*}
   D^if(x)(v_1,\ldots,v_i)& -  D^i(S\circ g\circ T)(x)(v_1,\ldots,v_i) = \\ &
  D^if(x)(v_1,\ldots,v_i)- 
  D^i(f\circ\pi)(x)(v_1,\ldots,v_i) + \\ & D^i(f\circ\pi)(x)(v_1,\ldots,v_i)-D^i(g\circ \pi)(x)(v_1,\ldots,v_i)\\
 & \in W+W = 2W.
\end{align*}


\begin{thebibliography}{00}

\bibitem{banach} Stefan Banach, {\em Th\'eorie des op\'erations lin\'eaires,}
  Warszawa, 1932.

\bibitem{bastiani}
  A. Bastiani, {\em Applications diff\'erentiables et vari\'et\'es diff\'erentiables de dimension infinie,} J. Anal. Math. \textbf{13} (1964), 1--114.

\bibitem{BHKS}
  Kaushik Bhattacharya, Bamdad Hosseini, Nikola B Kovachki, and Andrew M Stuart, {\em Model reduction and neural networks for parametric PDEs,} SMAI Journal of Computational Mathematics \textbf{7} (2021), 121--157.

\bibitem{engelking}
R. Engelking, \textit{General Topology}, 
Math. Monographs, {\bf 60}, PWN - Polish Scient. Publishers, Warsaw, 1977.

\bibitem{HMVZ}
P. H\'ajek, V. Montesinos Santaluc\'\i a, J. Vanderwerff, V. Zizler,
{\em Biorthogonal systems in Banach spaces,}
CMS Books Math./Ouvrages Math. SMC, \textbf{26},
Springer, New York, 2008.

\bibitem{jarchow}
  Hans Jarchow, {\em Locally convex spaces,} B.G. Teubner, Stuttgart, 1981.

\bibitem{katetov}
M. Kat\v etov, {\em On real-valued functions in topological spaces,}
Fund. Math. \textbf{38} (1951), 85--91; {\em corrigendum,} Fund. Math. \textbf{40} (1953), 203--205.

\bibitem{keller} Heinrich Hans Keller, {\em Differential Calculus in Locally Convex Spaces,} Springer Berlin Heidelberg, 1974.

\bibitem{KG}  A.A. Kirillov and A.D. Gvishiani, {\em Theorems and Problems in Functional Analysis,} translated from Russian by Harold H. McFaden, Springer-Verlag, New York, 1982.

\bibitem{KLS}
Nikola B. Kovachki, Samuel Lanthaler and Andrew M. Stuart, {\em Operator learning: Algorithms and analysis,} Handbook of Numerical Analysis \textbf{25} (2024), 419--467

\bibitem{KLLABSA}
  Nikola Kovachki, Zongyi Li, Burigede Liu, Kamyar Azizzadenesheli, Kaushik Bhattacharya, Andrew Stuart, Anima Anandkuma, {\em  Neural Operator: Learning Maps Between Function Spaces With Applications to PDEs,} Journal of Machine Learning Research \textbf{24} (2023), 89:1--97, 2023.

\bibitem{LJK}
  Lu Lu, Pengzhan Jin, and George Em Karniadakis, {\em DeepONet: Learning nonlinear operators for identifying differential equations based on the universal approximation theorem of operators,} Nature Machine Intelligence \textbf{3} (2021), 218--229.

\bibitem{michael} E. Michael, {\em Continuous selections. I,} Annals of Math. \textbf{63} (1956), 361--382.

\bibitem{michal}
  A.D. Michal, {\em Differential calculus in linear topological spaces,} Proc. Nat. Acad. Sci. U.S.A. \textbf{24} (1938), 340--342.

\bibitem{neeb}
 Karl-Hermann Neeb, {\em Towards a Lie theory of locally convex groups,}
Jpn. J. Math. \textbf{1} (2006), 291--468.

\bibitem{schaefer}
  Helmut H. Schaefer, {\em Topological Vector Spaces,} Springer, New York, 1971.

\bibitem{treves} François Tr\`eves, {\em Locally convex spaces and linear partial differential equations,} Springer--Verlag, New York, 1967.

\bibitem{treves2} François Tr\`eves, {\em 
Topological vector spaces, distributions and kernels,}
Unabridged republication of the 1967 original,
Dover Publications, Inc., Mineola, NY, 2006.

\bibitem{weil}
  Andr\'e Weil, {\em Sur les espaces \`a structure uniforme et sur la topologie g\'en\'erale,} Act. Sci. Ind. \textbf{551}, Paris, 1937.
\end{thebibliography}
\end{document}